\newtheorem{theorem}{Theorem}[section]
\newtheorem{corollary}[theorem]{Corollary}
\newtheorem{lemma}[theorem]{Lemma}
\newtheorem{proposition}[theorem]{Proposition}
\theoremstyle{definition}
\newtheorem{example}{Example}[section]
\newtheorem{remark}{Remark}[section]
\begin{document}
\sloppy

\author{Janusz Morawiec}
\email[J. Morawiec]{janusz.morawiec@us.edu.pl}
\title{An application of medial limits to iterative functional equations}
\address{Institute of Mathematics, University of Silesia, Bankowa 14, 40-007 Katowice, Poland}
\keywords{Banach limits, medial limits, iterative functional equations, bounded solutions}

\begin{abstract}
Assume that $(\Omega,\mathcal A,P)$ is a probability space, $f\colon[0,1]\times\Omega\to[0,1]$ is a function such that $f(0,\omega)=0$, $f(1,\omega)=1$ for every $\omega\in\Omega$, $g\colon[0,1]\to\mathbb R$ is a bounded function such that $g(0)=g(1)=0$, and $a,b\in\mathbb R$. Applying medial limits we describe bounded solutions $\varphi\colon[0,1]\to\mathbb R$ of the equation
\begin{equation*}
\varphi(x)=\int_{\Omega}\varphi(f(x,\omega))dP(\omega)+g(x)
\end{equation*} 
satisfying the boundary conditions $\varphi(0)=a$ and $\varphi(1)=b$.
\end{abstract}

\maketitle


\section{Introduction}
Assume that $(\Omega,\mathcal A,P)$ is a probability space, $f\colon[0,1]\times\Omega\to[0,1]$ is a function satisfying the boundary condition
\begin{equation}\label{cond}
f(0,\omega)=0\quad\hbox{and}\quad f(1,\omega)=1\quad\hbox{for every }\omega\in\Omega, 
\end{equation} 
$g\colon[0,1]\to\mathbb R$ is a bounded function with $g(0)=g(1)=0$ and $a,b\in\mathbb R$ are fixed numbers. We are interested in bounded solutions $\varphi\colon[0,1]\to\mathbb R$ of the following iterative functional equation
\begin{equation}\label{E}
\varphi(x)=\int_{\Omega}\varphi(f(x,\omega))dP(\omega)+g(x)\tag{E$_g$}.
\end{equation}
We say that a function $\varphi\colon[0,1]\to\mathbb R$ is a solution of equation \eqref{E} if for every $x\in[0,1]$ the function $\varphi\circ f(x,\cdot)$ is measurable and \eqref{E} holds.

The main purpose of this paper is to describe all solutions of equation \eqref{E} in some classes of bounded functions $h\colon[0,1]\to\mathbb R$ such that $h(0)=a$ and $h(1)=b$. We are also interested under which assumptions any bounded solution $\varphi\colon[0,1]\to\mathbb R$ of equation \eqref{E} with a certain property can be expressed in the form $\varphi=\Phi+\varphi_*$, where $\Phi$ is a solution of the equation
\begin{equation}\label{E0}
\Phi(x)=\int_{\Omega}\Phi(f(x,\omega))dP(\omega)\tag{E$_0$}
\end{equation}
having the same property as $\varphi$ and $\varphi_*$ is a specific solution of equation \eqref{E}. This problem seems to be easy to answer, but the difficulty is that the classes considered in this paper are not linear spaces. It is even not clear when the existence of a solution with a certain property of one of the equations \eqref{E} and \eqref{E0} implies the existence of a solution with the same property of the other of these equations. Such a problem is quite natural in the theory of functional equations and it has been studied several times by many authors for different functional equations in various classes of functions; mainly in cases where the class of considered functions forms a vector space. 

Functional equations \eqref{E0} and \eqref{E}, as well as their generalizations and special cases, are investigated in various classes of functions in connection with their appearance in miscellaneous fields of science (for more details see \cite[Chapter XIII]{K1968}, \cite[Chapters 6, 7]{KCG1990} and \cite[Section 4]{BJ2001}). 
As emphasized in \cite[section 0.3]{KCG1990} iteration is the fundamental technique for solving functional equations in a single variable, and iterates usually appear in the formulae for solutions. In most cases such formulas are obtained by taking the limit of sequences in which iterates are involved. In this paper we make use of this fundamental technique, but the goal is to apply a subclass of Banach limits instead of the limit. The idea of replacing the limit by a Banach limit seems to be clear, because we do not need any additional assumption guaranteeing the existence of a Banach limit of a bounded sequence, in contrast to the case when we want to calculate the limit of such a sequence. 

This paper is organized as follows. Section 2 contains the notation and basic tools required for our considerations. In sections 3 and 4 we describe bounded solutions $\varphi\colon[0,1]\to\mathbb R$ with $\varphi(0)=a$ and $\varphi(1)=b$ of equations \eqref{E0} and \eqref{E}, respectively. Finally, in section 5, we formulate some consequences of the main results obtained and we present a few examples of the possible applications of those results.


\section{Preliminaries}
Denote by $B([0,1],\mathbb R)$ the space of all bounded functions $h\colon[0,1]\to\mathbb R$ endowed with the supremum norm and respectively by $BM([0,1],\mathbb R)$, $C_x([0,1],\mathbb R)$, $Lip([0,1],\mathbb R)$ and $BV([0,1],\mathbb R)$ its subspaces of all functions that are Borel measurable, continuous at $x\in[0,1]$, Lipschitzian, and of bounded variation (i.e. functions which can be written as a difference of two increasing functions; see \cite[Chapter 1.4]{L1988}). Next denote by $\mathcal M([0,1],\mathbb R)$ the space consisting of all functions $h\in B([0,1],\mathbb R)$ such that for every $x\in[0,1]$ the function $h\circ f(x,\cdot)$ is measurable. Note that the space $\mathcal M([0,1],\mathbb R)$ is at most one dimensional, because every constant function belongs to it.

Define an operator $T\colon\mathcal M([0,1],\mathbb R)\to B([0,1],\mathbb R)$ by setting
\begin{equation*}\label{Th}
Th(x)=\int_{\Omega}h(f(x,\omega))dP(\omega).  
\end{equation*}
Note that $T$ is linear and continuous with $\|T\|=1$. Moreover, equation \eqref{E} can be written now in the form
\begin{equation}\label{eT}
\varphi=T\varphi+g.
\end{equation}
To the end of this paper we fix a subspace $\mathcal B([0,1],\mathbb R)$ of the space $\mathcal M([0,1],\mathbb R)$ that is invariant under $T$, i.e. $\mathcal B([0,1],\mathbb R)$ is such that
\begin{equation}\label{T}
T(\mathcal B([0,1],\mathbb R))\subset \mathcal B([0,1],\mathbb R).
\end{equation}

Before we give examples showing how the space $\mathcal B([0,1],\mathbb R)$ can look like in certain situations, let us say what we mean writing ${\mathcal A}=2^{\Omega}$. Namely, in such a case we may (and do) assume that $\Omega$ is countable; otherwise we can replace $\Omega$ by its subset $\{\omega\in\Omega: P(\{\omega\})>0\}$, which is clearly countable. Moreover, integration in \eqref{E} reduces to summation and every bounded function is measurable.	

\begin{example}\label{ex1}
If ${\mathcal A}=2^{\Omega}$, then \eqref{T} holds with $\mathcal B([0,1],\mathbb R)=B([0,1],\mathbb R)$.	
\end{example}

\begin{example}\label{ex5}
If
\begin{enumerate}
\item[(H$_1$)] $f$ is increasing with respect to the first variable and measurable with respect to the second variable,
\end{enumerate} 
then \eqref{T} holds with $\mathcal B([0,1],\mathbb R)=BV([0,1],\mathbb R)$. 
\end{example}

\begin{example}\label{ex4}	
If
\begin{enumerate}
\item[(H$_2$)] $\int_{\Omega}|f(x,\omega)-f(y,\omega)|dP(\omega)\leq|x-y|$ for all $x,y\in [0,1]$ and $f$ is measurable with respect to the second variable,
\end{enumerate} 
then \eqref{T} holds with $\mathcal B([0,1],\mathbb R)=Lip([0,1],\mathbb R)$.
\end{example}

Let $\mathcal B$ be the $\sigma$-algebra of all Bores subsets of $[0,1]$. Following \cite{BK1977} we say that $h\colon[0,1]\times\Omega\to[0,1]$ is a
random-valued function (shortly: an rv-function) if it is measurable with respect to $\sigma$-algebra $\mathcal B\times\mathcal A$.

\begin{example}\label{ex7}
If $f$ is an rv-function, then \eqref{T} holds with $\mathcal B([0,1],\mathbb R)=BM([0,1],\mathbb R)$. 
\end{example}

\begin{example}\label{ex2}
Fix $x_0\in\{0,1\}$ and let $f(\cdot,\omega)$ be continuous at $x_0$ for every $\omega\in\Omega$. If ${\mathcal A}=2^{\Omega}$, then \eqref{T} holds with $\mathcal B([0,1],\mathbb R)=C_{x_0}([0,1],\mathbb R)$. If $f$ is an rv-function, then \eqref{T} holds with $\mathcal B([0,1],\mathbb R)=BM([0,1],\mathbb R)\cap C_{x_0}([0,1],\mathbb R)$.
\end{example}

To describe solutions of equation \eqref{E} in the case where $\mathcal A=2^\Omega$ we need the concept of Banach limits, established in \cite{B1955}. However in the general case, when integration is required, we need the concept of medial limits, established in \cite{M1969} (cf. \cite{M1973}) as a very special class of Banach limits. 

Denote by $l^\infty(\mathbb N)$ the space of all bounded real sequences equipped with the supremum norm and by $\mathfrak B$ the family of all Banach limits defined on $l^\infty(\mathbb N)$. Recall that $B\in\mathfrak B$ if $B\colon l^\infty(\mathbb N)\to\mathbb R$ is a linear, positive, shift invariant and normalized operator. It is easy to see that any $B\in\mathfrak B$ is continuous with $\|B\|=1$. It is known that the cardinality of $\mathfrak B$ is equal to $2^{\mathfrak c}$ (see \cite{D1965}), and even that the cardinality of the set of all extreme points of $\mathfrak B$ is equal to $2^{\mathfrak c}$ (see \cite{C1969}, cf. \cite{P2014}); here $\mathfrak c$ is the cardinality of the continuum.
 
As it was mentioned above, in the general case we need to integrate the pointwise Banach limit of a bounded sequence of measurable functions. However, the problem is that there is no guarantee that the pointwise Banach limit of a bounded sequence of measurable functions is a measurable function (see \cite[page 288]{W2000}). Fortunately, it is known that there are Banach limits possessing exactly the required property. More precisely, a Banach limit $B$ is called a medial limit if $\int_{\Omega}B((h_m(\omega))_{m\in\mathbb N})dP(\omega)$ is defined and equal to $B((\int_{\Omega}h_m(\omega)dP(\omega))_{m\in\mathbb N})$ whenever $(h_m)_{m\in\mathbb N}$ is a bounded sequence of measurable real-valued functions on $\Omega$. It is also known that the continuum hypothesis implies the existence of medial limits. More results on the existence and non-existence of medial limits can be found in \cite[Chapter 53]{F2015} and in \cite{L2009}. 
Denote by $\mathfrak M$ the family of all medial limits, i.e. $B\in\mathfrak M\subset\mathfrak B$ if
\begin{equation*}
\int_{\Omega}B\big((h_m(\omega))_{m\in\mathbb N}\big)dP(\omega)
=B\left(\Big(\int_{\Omega}h_m(\omega)dP(\omega)\Big)_{m\in\mathbb N}\right)
\end{equation*}
for every sequence $(h_m)_{m\in\mathbb N}$ of bounded measurable real-valued functions defined on $\Omega$. Note that $\mathfrak M=\mathfrak B$ in the case where $\mathcal A=2^\Omega$.

From now on, given a nonempty family $\mathcal F\subset B([0,1],\mathbb R)$ we denote by $\mathcal F_a^b$ the family of all $h\in\mathcal F$ such that $h(0)=a$ and $h(1)=b$. To distinguish two important families let us adopt the shorthand $\mathcal M_a^b=\mathcal M([0,1],\mathbb R)_a^b$ and $\mathcal B_a^b=\mathcal B([0,1],\mathbb R)_a^b$. It is clear that $\mathcal B_0^0$ is a subspace of the space $\mathcal B([0,1],\mathbb R)$ and that $\mathcal B_a^b+\mathcal B_0^0=\mathcal B_a^b$. It is also clear that if we determine all solutions of equation \eqref{E} in the class $\mathcal B_a^b$, then we can easily describe all solutions of this equation in the class $\mathcal B([0,1],\mathbb R)$. Now we are in a position to begin describing solutions of equation \eqref{E} in the class $\mathcal B_a^b$. Our first lemma is a simple consequence of \eqref{cond} and \eqref{T}.

\begin{lemma}\label{lempropTB}
\begin{enumerate}
\item[\rm (i)] If $h\in\mathcal B_a^b$, then $Th\in\mathcal B_a^b$. 
\item[\rm (ii)] If $\varphi\in\mathcal B_a^b$ satisfies \eqref{E}, then $g\in\mathcal B_0^0$.
\end{enumerate}
\end{lemma}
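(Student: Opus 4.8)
The plan is to treat the two parts in order, using part (i) to drive part (ii), since both reduce to evaluating integrals at the endpoints $x=0$ and $x=1$.

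For part (i), I would first record that $Th\in\mathcal B([0,1],\mathbb R)$: this is immediate from the invariance assumption \eqref{T}, because $h\in\mathcal B_a^b\subset\mathcal B([0,1],\mathbb R)$. It then remains only to verify the two boundary values. Here the boundary condition \eqref{cond} does the work. At $x=0$ we have $f(0,\omega)=0$ for every $\omega\in\Omega$, so $h(f(0,\omega))=h(0)=a$ for every $\omega$, and since $P$ is a probability measure,
\begin{equation*}
Th(0)=\int_{\Omega}h(f(0,\omega))\,dP(\omega)=\int_{\Omega}a\,dP(\omega)=a.
\end{equation*}
The same computation at $x=1$, using $f(1,\omega)=1$ and $h(1)=b$, gives $Th(1)=b$. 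Hence $Th\in\mathcal B_a^b$.

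For part (ii), I would rewrite equation \eqref{E} in the form $g=\varphi-T\varphi$, as in \eqref{eT}. Since $\varphi\in\mathcal B_a^b$, part (i) yields $T\varphi\in\mathcal B_a^b$; in particular $T\varphi(0)=a$ and $T\varphi(1)=b$. Because $\mathcal B([0,1],\mathbb R)$ is a (linear) subspace of $\mathcal M([0,1],\mathbb R)$, the difference $\varphi-T\varphi$ again lies in $\mathcal B([0,1],\mathbb R)$, so $g\in\mathcal B([0,1],\mathbb R)$. Finally, evaluating at the endpoints gives $g(0)=\varphi(0)-T\varphi(0)=a-a=0$ and $g(1)=\varphi(1)-T\varphi(1)=b-b=0$, so $g\in\mathcal B_0^0$.

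I do not expect any genuine obstacle here; the statement is essentially a bookkeeping consequence of \eqref{cond}, the normalization $P(\Omega)=1$, and the invariance \eqref{T}. The only points that deserve an explicit word are that $T$ may legitimately be applied to $h$ (which needs $h\in\mathcal B([0,1],\mathbb R)\subset\mathcal M([0,1],\mathbb R)$) and that $g$ inherits membership in $\mathcal B([0,1],\mathbb R)$ from the subspace structure, so that the conclusion $g\in\mathcal B_0^0$ is meaningful and not merely a statement about the two boundary values.
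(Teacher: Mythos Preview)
Your proof is correct and is exactly the argument the paper has in mind: the paper gives no detailed proof at all, merely noting that the lemma ``is a simple consequence of \eqref{cond} and \eqref{T}'', which is precisely what you have spelled out. Your extra care in observing that $g=\varphi-T\varphi$ lies in $\mathcal B([0,1],\mathbb R)$ by the subspace property is the right justification for the conclusion $g\in\mathcal B_0^0$.
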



\section{Solutions of equation \eqref{E0}}
If $h\in\mathcal B([0,1],\mathbb R)$, then $\sup_{m\in\mathbb N}\|T^mh\|\leq\|h\|$, and hence
$(T^mh(x))_{m\in\mathbb N}\in l^\infty(\mathbb N)$ for every $x\in [0,1]$. Therefore, for all $h\in\mathcal B([0,1],\mathbb R)$ and $B\in\mathfrak B$ we define a function $B_h\colon[0,1]\to\mathbb R$ by putting
\begin{equation*}
B_h(x)=B\big((T^mh(x))_{m\in\mathbb N}\big).
\end{equation*}
The functions $B_h$ plays a crucial role in this section as well as in this paper. So, we need some fact about them. 

\begin{lemma}\label{lemBh}
Assume that $h\in\mathcal B_a^b$. If $B\in\mathfrak M$, then $B_h\in \mathcal M_a^b$ and $TB_h=B_h$.
\end{lemma}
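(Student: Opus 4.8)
The plan is to verify four things separately: that $B_h$ is bounded, that it satisfies the boundary conditions, that it lies in $\mathcal M([0,1],\mathbb R)$, and that it is a fixed point of $T$. Boundedness is immediate. Since $\sup_{m\in\mathbb N}\|T^mh\|\le\|h\|$, every sequence $(T^mh(x))_{m\in\mathbb N}$ has supremum norm at most $\|h\|$, and because $\|B\|=1$ we obtain $|B_h(x)|\le\|h\|$ for all $x\in[0,1]$. For the boundary conditions I would first show by induction, using Lemma \ref{lempropTB}(i), that $T^mh(0)=a$ and $T^mh(1)=b$ for every $m$; hence the sequences $(T^mh(0))_{m\in\mathbb N}$ and $(T^mh(1))_{m\in\mathbb N}$ are constant, and since a Banach limit is linear and normalized it returns the constant value, giving $B_h(0)=a$ and $B_h(1)=b$.

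Next I would establish that $B_h\in\mathcal M([0,1],\mathbb R)$, i.e.\ that $B_h\circ f(x,\cdot)$ is measurable for each fixed $x\in[0,1]$. Here I fix $x$ and consider the functions $h_m\colon\Omega\to\mathbb R$ defined by $h_m(\omega)=T^mh(f(x,\omega))$. Because $\mathcal B([0,1],\mathbb R)$ is $T$-invariant by \eqref{T}, each $T^mh$ belongs to $\mathcal M([0,1],\mathbb R)$, so each $h_m$ is measurable; moreover the $h_m$ are uniformly bounded by $\|h\|$. The defining property of a medial limit then guarantees that $\omega\mapsto B\big((h_m(\omega))_{m\in\mathbb N}\big)=B_h(f(x,\omega))$ is measurable, which is exactly what is required. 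This is the first point where passing from an arbitrary Banach limit to a medial limit is essential.

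Finally, for the invariance $TB_h=B_h$ I would compute, for fixed $x$,
\[
TB_h(x)=\int_\Omega B_h(f(x,\omega))\,dP(\omega)=\int_\Omega B\big((T^mh(f(x,\omega)))_{m\in\mathbb N}\big)\,dP(\omega),
\]
and then apply the medial-limit identity to interchange the integral with $B$. Since $\int_\Omega T^mh(f(x,\omega))\,dP(\omega)=T^{m+1}h(x)$, this yields $TB_h(x)=B\big((T^{m+1}h(x))_{m\in\mathbb N}\big)$. The sequence on the right is the shift of $(T^mh(x))_{m\in\mathbb N}$, so shift-invariance of $B$ gives $TB_h(x)=B\big((T^mh(x))_{m\in\mathbb N}\big)=B_h(x)$.

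I expect the main obstacle to be the interchange of integration and the operator $B$ in this last step, and, correspondingly, the measurability claim in the previous step: a general Banach limit would permit neither, and it is precisely the hypothesis $B\in\mathfrak M$ that makes both work. The remaining ingredients---boundedness, the boundary conditions via Lemma \ref{lempropTB}, and shift-invariance---are routine once this interchange is available.
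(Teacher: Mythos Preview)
Your proposal is correct and follows essentially the same approach as the paper's proof: boundedness via $\|B\|=1$, boundary values via Lemma~\ref{lempropTB}(i), membership in $\mathcal M([0,1],\mathbb R)$ via the medial-limit property, and $TB_h=B_h$ via the interchange of $\int_\Omega$ with $B$ followed by shift-invariance. If anything, you are more explicit than the paper about the measurability step, where the paper simply writes ``Since $B\in\mathfrak M$, it follows that $B_h\in\mathcal M([0,1],\mathbb R)$'' and leaves the reader to unpack the argument you have spelled out.
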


\begin{proof}
Fix $B\in\mathfrak M$. From Lemma \ref{lempropTB}(i) we see that $T^mh\in \mathcal B_a^b$ for every $m\in\mathbb N$. Then 
\begin{equation*}
\sup_{x\in[0,1]}|B_h(x)|\leq\sup_{x\in[0,1]}\|B\|\sup_{m\in\mathbb N}|T^mh(x)|\leq\|h\|.
\end{equation*}
Thus $B_h\in B([0,1],\mathbb R)$. Since $B\in\mathfrak M$, it follows that $B_h\in\mathcal M([0,1],\mathbb R)$. Moreover, \eqref{cond} implies $B_h(0)=a$ and $B_h(1)=b$. In consequence, $B_h\in \mathcal M_a^b$. 

Applying properties of medial limits we obtain
\begin{align*}
TB_h(x)&=\int_{\Omega}B\left(\big(T^mh(f(x,\omega))\big)_{m\in\mathbb N}\right)dP(\omega)\\
&=B\left(\Big(\int_{\Omega}T^mh(f(x,\omega))dP(\omega)\Big)_{m\in\mathbb N}\right)
=B\left(\big(T^{m+1}h(x)\big)_{m\in\mathbb N}\right)=B_h(x)
\end{align*}
for every $x\in[0,1]$.
\end{proof}

We now want to find conditions under which $B_h\in\mathcal B_a^b$ for every $h\in\mathcal B_a^b$. Unfortunately, there is no chance to prove that $B_h\in\mathcal B_a^b$ in the general case. In fact, we would have to show that \eqref{T} holds, i.e. $TB_h\in\mathcal B_a^b$, but by Lemma \ref{lemBh} we have $TB_h=B_h$. This observation suggests the following definition. 

We say that the class $\mathcal B_a^b$ is {\it closed under $B\in\mathfrak B$}, if $B_h\in\mathcal B_a^b$ for every $h\in\mathcal  B_a^b$. 

It turns out that there are many interesting classes that are closed under some Banach limits. Let us now give a few examples of such classes. The first two are immediate consequences of Examples \ref{ex1} and \ref{ex7}.

\begin{example}\label{ex1a}
If $\mathcal B([0,1],\mathbb R)=B([0,1],\mathbb R)$, then $B([0,1],\mathbb R)_a^b$ is closed under any $B\in\mathfrak B$.
\end{example}

\begin{example}\label{ex7a}
If $f$ is an rv-function and $B$ is a medial limit with respect to a probability Borel measure on $[0,1]$, then $BM([0,1],\mathbb R)_a^b$ is closed under $B$. 
\end{example}

\begin{example}\label{ex2a}
Fix $x_0\in\{0,1\}$. If $\mathcal A=2^\Omega$ and
\begin{enumerate}
\item[(H$_{3}$)] there exists $\eta>0$ such that $\frac{f(x,\omega)-f(x_0,\omega)}{x-x_0}\leq 1$ for all $\omega\in\Omega$ and $x\in(0,1)$ with $|x-x_0|\leq\eta$,
\end{enumerate} 
then $C_{x_0}([0,1],\mathbb R)_a^b$ is closed under any $B\in\mathfrak B$.
To prove the conclusion let us put $\mathcal B([0,1],\mathbb R)=C_{x_0}([0,1],\mathbb R)$; this is possibly according to Example \ref{ex2}, because (H$_3$) yields the continuity of $f(\cdot,\omega)$ at $x_0$ for every $\omega\in\Omega$. Let $B\in\mathfrak B$ and let  $h\in C_{x_0}([0,1],\mathbb R)_a^b$. It is clear that $B_h(0)=a$ and $B_h(1)=b$. To prove that $B_h$ is continuous at $x_0$ fix $\varepsilon>0$. Then choose $\delta\in(0,\eta)$, where $\eta$ is a number occurring in (H$_4$), such that 
\begin{equation}\label{hxhx0}
\sup_{x\in A}|h(x)-h(x_0)|\leq\varepsilon,
\end{equation} 
where $A=\{x\in[0,1]:|x-x_0|\leq\delta\}$. Then $|f(x,\omega)-f(x_0,\omega)|\leq\delta$ for all $\omega\in\Omega$ and $x\in A$, and by an easy induction, we obtain $\sup_{x\in A}|T^mh(x)-T^mh(x_0)|\leq\sup_{x\in A}|h(x)-h(x_0)|$ for every $m\in\mathbb N$. This jointly with \eqref{hxhx0} gives
\begin{align*}
\sup_{x\in A}|B_h(x)-B_h(x_0)|&\leq\sup_{x\in A}\|B\|\sup_{m\in\mathbb N}|T^mh(x)-T^mh(x_0)|\leq\varepsilon,
\end{align*}
which proves that $B_h$ is continuous at $x_0$.
\end{example}

\begin{example}\label{ex4a}
If (H$_2$) holds, then $Lip([0,1],\mathbb R)_a^b$ is closed under any $B\in\mathfrak B$. For the prove of the conclusion we put $\mathcal B([0,1],\mathbb R)=Lip([0,1],\mathbb R)$; this is acceptable according to Example \ref{ex4}. Fix $B\in\mathfrak B$ and $h\in Lip([0,1],\mathbb R)_a^b$. Clearly, $B_h(0)=a$ and $B_h(1)=b$. To prove that $B_h$ is Lipschitzian denote by $L$ the Lipschitz constant of $h$. A simple induction gives $|T^mh(x)-T^mh(y)|\leq L|x-y|$ for all $m\in\mathbb N$ and $x,y\in[0,1]$. Thus,
\begin{align*}
|B_h(x)-B_h(y)|&\leq\|B\|\sup_{m\in\mathbb N}|T^mh(x)-T^mh(y)|\leq L|x-y|
\end{align*}
for all $x,y\in[0,1]$.
\end{example}

\begin{example}\label{ex5a}
If (H$_1$) holds, then $BV([0,1],\mathbb R)_a^b$ is closed under any $B\in\mathfrak B$. To show that the conclusion holds we put $\mathcal B([0,1],\mathbb R)=BV([0,1],\mathbb R)$; this is possible according to Example \ref{ex5}. Fix $B\in\mathfrak B$ and $h\in BV([0,1],\mathbb R)_a^b$. Obviously, $B_h(0)=a$ and $B_h(1)=b$. Moreover, there exist increasing functions $h_1,h_2\in B([0,1],\mathbb R)$ such that $h=h_1-h_2$. Thus $T^mh=T^mh_1-T^mh_2$ for every $m\in\mathbb R$, and hence $B_h=B_{h_1}-B_{h_2}$. Finally, (H$_1$) jointly with properties of Banach limits implies that both the functions $B_{h_1}$ and $B_{h_2}$ are increasing.
\end{example}

We are now in a position to formulate the main results of this section. To simplify their statements, let us denote by $\mathfrak{sol}_a^b\eqref{E0}$ the family of all functions from $\mathcal B_a^b$ satisfying equation \eqref{E0}.

\begin{theorem}\label{thmmainE0}
For every $B\in\mathfrak M$ we have 
$\mathfrak{sol}_a^b\eqref{E0}\subset\left\{B_h:h\in\mathcal B_a^b\right\}$.
Moreover, if $\mathcal B_a^b$ is closed under $B\in\mathfrak M$, then 
$\mathfrak{sol}_a^b\eqref{E0}=\left\{B_h:h\in\mathcal B_a^b\right\}$.
\end{theorem}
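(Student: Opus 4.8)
The plan is to prove the two inclusions separately, leaning entirely on Lemma \ref{lemBh} and on the normalization of Banach limits. For the first inclusion I would fix $B\in\mathfrak M$ and take an arbitrary $\Phi\in\mathfrak{sol}_a^b\eqref{E0}$, so that $\Phi\in\mathcal B_a^b$ and $T\Phi=\Phi$. Iterating the latter gives $T^m\Phi=\Phi$ for every $m\in\mathbb N$, whence the sequence $(T^m\Phi(x))_{m\in\mathbb N}$ is constantly equal to $\Phi(x)$ at each $x$. Since any Banach limit is linear and normalized, it returns the common value of a constant sequence, so I would conclude $B_\Phi(x)=B((\Phi(x))_{m\in\mathbb N})=\Phi(x)$ for every $x\in[0,1]$, i.e. $\Phi=B_\Phi$. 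As $\Phi\in\mathcal B_a^b$, this exhibits $\Phi$ as a member of $\{B_h:h\in\mathcal B_a^b\}$, giving the claimed inclusion.

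For the ``moreover'' part I would establish the reverse inclusion under the closedness hypothesis. Fix $B\in\mathfrak M$ such that $\mathcal B_a^b$ is closed under $B$, and take any $B_h$ with $h\in\mathcal B_a^b$. Lemma \ref{lemBh} already supplies two of the three things I need: $B_h\in\mathcal M_a^b$ and $TB_h=B_h$, the latter saying precisely that $B_h$ solves \eqref{E0}. The only remaining point is membership in the prescribed class $\mathcal B_a^b$, and this is exactly what being closed under $B$ delivers: $B_h\in\mathcal B_a^b$. Hence $B_h\in\mathfrak{sol}_a^b\eqref{E0}$, and combined with the first inclusion this yields the asserted equality.

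The argument is short because the substantive work was done in Lemma \ref{lemBh}; the one subtlety I would flag, and which I expect to be the only genuine obstacle, is the gap between $\mathcal M_a^b$ and $\mathcal B_a^b$. For a general $h\in\mathcal B_a^b$ the function $B_h$ is guaranteed only to lie in $\mathcal M_a^b$, so without the closedness assumption one cannot assert that $B_h$ belongs to the working class $\mathcal B_a^b$ in which solutions of \eqref{E0} are being sought. This is why the first inclusion holds unconditionally, since there $\Phi$ is assumed from the outset to lie in $\mathcal B_a^b$ and I merely recognize it as its own $B_\Phi$, whereas the converse genuinely requires $\mathcal B_a^b$ to be closed under $B$. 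No estimates or limiting arguments are needed beyond invoking normalization, so I anticipate no computational difficulty.
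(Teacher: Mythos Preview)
Your proof is correct and follows essentially the same route as the paper: show $\Phi=B_\Phi$ via $T^m\Phi=\Phi$ and normalization, then for the reverse inclusion invoke Lemma~\ref{lemBh} together with the closedness assumption to place $B_h$ in $\mathcal B_a^b$. The paper's write-up is terser but the ideas are identical.
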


\begin{proof}
Fix $B\in\mathfrak M$ and $\Phi\in\mathfrak{sol}_a^b\eqref{E0}$. Then $T^m\Phi=\Phi$ for every $m\in\mathbb N$, and hence
\begin{equation*}
\Phi(x)=B\big((T^m\Phi(x))_{m\in\mathbb N}\big)=B_\Phi(x)
\end{equation*}
for every $x\in[0,1]$. Thus $\mathfrak{sol}_a^b\eqref{E0}\subset\left\{B_h:h\in\mathcal B_a^b\right\}$. The opposite inclusion follows from Lemma \ref{lemBh}. 
\end{proof}

Solutions of equations \eqref{E0} was investigated in \cite{M1995,M1998}, basically in almost the same classes of bounded functions. However, Theorem \ref{thmmainE0} is incomparable with the results obtained in the papers mentioned, in which the existence and the uniqueness problems have been considered as well as properties of the unique solution have been studied.


\section{Solutions of equation \eqref{E}}
In this section we describe all functions belonging to the class $\mathcal B_a^b$ which are solutions of equation \eqref{E}. We also give the formula for these solutions showing that each of them can be written in the form $\Phi+\varphi_*$, where $\Phi\in\mathcal B_a^b$ is a solution of equation \eqref{E0} and  $\varphi_*\in\mathcal B_0^0$ is a particular solution of equation \eqref{E}. To find $\varphi_*$ we need define a certain family of functions generated by $g\in\mathcal B_0^0$; recall that $g\in\mathcal B_0^0$ is a necessary condition for equation \eqref{E} to have a solution in the class $\mathcal B_a^b$ by Lemma \ref{lempropTB}(ii). If $g\in\mathcal B_0^0$, then Lemma \ref{lempropTB}(i) yields $\{T^lg:l\in\mathbb N\}\subset\mathcal B_0^0$. Therefore, given $g\in\mathcal B_0^0$ and $k\in\mathbb N$ we can define a function $g_k\colon[0,1]\to\mathbb R$ by putting
\begin{equation*}
g_k(x)=\sum_{l=0}^{k-1}T^lg(x).
\end{equation*} 
Set
\begin{equation*}
\mathcal G=\{g_k:k\in\mathbb N\}.
\end{equation*}

As in the previous section, denote by $\mathfrak{sol}_a^b\eqref{E}$ the family of all functions from $\mathcal B_a^b$ satisfying equation \eqref{E}.

\begin{lemma}\label{lemG}
If $\mathfrak{sol}_a^b\eqref{E}\neq\emptyset$, then $\mathcal G$ is a bounded subset of $\mathcal B_0^0$.
\end{lemma}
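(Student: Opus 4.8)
The plan is to use the functional equation itself to telescope the partial sums $g_k$. Since $\mathfrak{sol}_a^b\eqref{E}\neq\emptyset$, I would first fix a solution $\varphi\in\mathcal B_a^b$ of equation \eqref{E}. By Lemma \ref{lempropTB}(ii) this already forces $g\in\mathcal B_0^0$, so the family $\mathcal G$ is well defined and, as noted just before the statement, $\{T^lg:l\in\mathbb N\}\subset\mathcal B_0^0$. I would then split the claim into its two parts: that each $g_k$ lies in $\mathcal B_0^0$, and that $\sup_{k\in\mathbb N}\|g_k\|<\infty$.

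For the membership I would simply observe that $\mathcal B_0^0$ is a linear subspace of $\mathcal B([0,1],\mathbb R)$; since each summand $T^lg$ belongs to $\mathcal B_0^0$, so does the finite sum $g_k=\sum_{l=0}^{k-1}T^lg$. Thus $\mathcal G\subset\mathcal B_0^0$, and this part uses nothing beyond $g\in\mathcal B_0^0$.

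The crux is the uniform bound, and here I would bring in the solution $\varphi$. Writing equation \eqref{E} in the operator form \eqref{eT}, namely $\varphi=T\varphi+g$, gives $g=\varphi-T\varphi$. Applying $T^l$ (legitimate since $T^l\varphi\in\mathcal B_a^b\subset\mathcal M([0,1],\mathbb R)$ for every $l$ by Lemma \ref{lempropTB}(i)) yields $T^lg=T^l\varphi-T^{l+1}\varphi$, so the sum defining $g_k$ telescopes:
\begin{equation*}
g_k=\sum_{l=0}^{k-1}\big(T^l\varphi-T^{l+1}\varphi\big)=\varphi-T^k\varphi.
\end{equation*}
Since $\|T\|=1$, we have $\|T^k\varphi\|\leq\|\varphi\|$, whence $\|g_k\|\leq\|\varphi\|+\|T^k\varphi\|\leq 2\|\varphi\|$ for every $k\in\mathbb N$. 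This shows $\mathcal G$ is bounded and completes the argument.

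I do not anticipate a genuine obstacle: the only real idea is recognizing the telescoping identity $g_k=\varphi-T^k\varphi$, which converts the a priori growing partial sums into a difference of two functions each controlled by $\|\varphi\|$. Everything else—the membership in $\mathcal B_0^0$ and the operator-norm estimate—is routine given the facts already established in the preliminaries.
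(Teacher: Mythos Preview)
Your proof is correct and follows essentially the same route as the paper: fix a solution $\varphi$, use $g=\varphi-T\varphi$ to telescope $g_k=\varphi-T^k\varphi$, and bound by $2\|\varphi\|$ via $\|T\|=1$. The paper's write-up is slightly more compressed but the argument is identical.
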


\begin{proof}
Fix $\varphi\in\mathfrak{sol}_a^b\eqref{E}$. Then Lemma \ref{lempropTB} implies that $\mathcal G\subset\mathcal B_0^0$. Applying \eqref{eT} we obtain
\begin{align*}
\|g_k\|&=\sup_{x\in[0,1]}|g_k(x)|=\sup_{x\in[0,1]}\left|\sum_{l=0}^{k-1}T^{l}\varphi(x)-\sum_{l=0}^{k-1}T^{l+1}\varphi(x)\right|\\
&\leq\sup_{x\in[0,1]}|\varphi(x)|+\sup_{x\in[0,1]}|T^{k}\varphi(x)|\leq \|\varphi\|+\|T\|^{k}\|\varphi\|= 2\|\varphi\|
\end{align*}
for every $k\in\mathbb N$.
\end{proof}

The above lemma shows that boundedness of the family $\mathcal G$ is a necessary condition for equation \eqref{E} to have a solution in the class $\mathcal B_a^b$. This also demonstrate, that $B_{g_k}$ is well defined for all $k\in\mathbb N$ and $B\in\mathfrak B$ whenever equation \eqref{E} has a solution in $\mathcal B_a^b$. 

\begin{lemma}\label{lemBg}
If $\mathfrak{sol}_a^b\eqref{E}\neq\emptyset$, then $B_{g_k}=0$ for all $B\in\mathfrak B$ and $k\in\mathbb N$. 
\end{lemma}

\begin{proof}
Fix $\varphi\in\mathfrak{sol}_a^b\eqref{E}$, $B\in\mathfrak B$ and $k\in\mathbb N$. By \eqref{eT} we get
\begin{equation*}
B_{g}(x)=B((T^{m}g(x))_{m\in\mathbb N})=B((T^{m}\varphi(x))_{m\in\mathbb N})-B((T^{m+1}\varphi(x))_{m\in\mathbb N})=0
\end{equation*}
for every $x\in[0,1]$. Now, it only remains to see that $B_{g_k}=kB_{g}$.
\end{proof}

If $g\in\mathcal B_0^0$ and $\mathcal G$ is bounded, then for every $B\in\mathcal B$ we define a function $B_*\colon[0,1]\to\mathbb R$ by putting
\begin{equation*}\label{B*}
B_*(x)=B((g_k(x))_{k\in\mathbb N}).
\end{equation*}

\begin{lemma}\label{lemB*}
Assume that $g\in\mathcal B_0^0$ and $\mathcal G$ is bounded. If $B\in\mathfrak M$, then $B_*\in\mathcal M_0^0$ and $B_*=TB_*+g$.
\end{lemma}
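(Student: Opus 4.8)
The plan is to mirror the structure of the proof of Lemma \ref{lemBh}, since $B_*$ plays here the role that $B_h$ played there, but with the sequence of iterates $(T^mh(x))_m$ replaced by the sequence of partial sums $(g_k(x))_k$. First I would verify that $B_*\in\mathcal M_0^0$. Boundedness of $\mathcal G$ gives, for each $x$, that $(g_k(x))_{k\in\mathbb N}\in l^\infty(\mathbb N)$, so $B_*$ is well defined and, via $\|B\|=1$, satisfies $\sup_{x}|B_*(x)|\le\sup_k\|g_k\|<\infty$, whence $B_*\in B([0,1],\mathbb R)$. Measurability of $B_*\circ f(x,\cdot)$ for each $x$, i.e. $B_*\in\mathcal M([0,1],\mathbb R)$, will follow precisely because $B\in\mathfrak M$ is a medial limit, applied to the bounded measurable sequence $\big(g_k(f(x,\cdot))\big)_{k\in\mathbb N}$. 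The boundary values $B_*(0)=B_*(1)=0$ come from $g_k\in\mathcal B_0^0$ (so $g_k(0)=g_k(1)=0$ for every $k$) together with normalization of $B$, giving $B_*\in\mathcal M_0^0$.

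The heart of the lemma is the identity $B_*=TB_*+g$. Following the computation in Lemma \ref{lemBh}, I would compute $TB_*(x)$ by pushing the integral through the medial limit:
\begin{align*}
TB_*(x)&=\int_\Omega B\big((g_k(f(x,\omega)))_{k\in\mathbb N}\big)\,dP(\omega)\\
&=B\left(\Big(\int_\Omega g_k(f(x,\omega))\,dP(\omega)\Big)_{k\in\mathbb N}\right)=B\big((Tg_k(x))_{k\in\mathbb N}\big),
\end{align*}
where the middle equality is exactly the defining property of a medial limit. The key algebraic step is then to relate $Tg_k$ to $g_k$ and $g$. Since $g_k=\sum_{l=0}^{k-1}T^lg$, linearity of $T$ gives $Tg_k=\sum_{l=0}^{k-1}T^{l+1}g=\sum_{l=1}^{k}T^lg=g_{k+1}-g$ (using $g_1=g$ and $T^0g=g$). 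Hence $TB_*(x)=B\big((g_{k+1}(x))_{k\in\mathbb N}\big)-B\big((g(x))_{k\in\mathbb N}\big)$.

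To finish I would use two properties of the Banach (medial) limit $B$: shift invariance, which yields $B\big((g_{k+1}(x))_{k\in\mathbb N}\big)=B\big((g_k(x))_{k\in\mathbb N}\big)=B_*(x)$, and the fact that $B$ applied to a constant sequence returns that constant, so $B\big((g(x))_{k\in\mathbb N}\big)=g(x)$. Combining these gives $TB_*(x)=B_*(x)-g(x)$ for every $x\in[0,1]$, which rearranges to $B_*=TB_*+g$, as required. The main obstacle I anticipate is bookkeeping the index shift $Tg_k=g_{k+1}-g$ correctly against the definition $g_k=\sum_{l=0}^{k-1}T^lg$ and aligning it with shift invariance of $B$; the measurability claim, by contrast, is essentially automatic once one invokes that $B$ is a medial limit, and the boundedness estimate is routine.
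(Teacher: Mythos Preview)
Your proposal is correct and follows essentially the same approach as the paper's proof: boundedness via $\|B\|=1$ and boundedness of $\mathcal G$, membership in $\mathcal M_0^0$ from the medial-limit property and $g_k\in\mathcal B_0^0$, and the computation $TB_*(x)=B\big((Tg_k(x))_k\big)=B\big((g_{k+1}(x)-g(x))_k\big)=B_*(x)-g(x)$ using shift invariance and normalization. The only cosmetic difference is that the paper expands $Tg_k$ as $\sum_{l=0}^{k-1}T^{l+1}g$ before identifying it with $g_{k+1}-g$, whereas you state this identity directly.
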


\begin{proof}
Fix $B\in\mathfrak M$ and observe that
\begin{equation*}
\sup_{x\in[0,1]}|B_*(x)|\leq\sup_{x\in[0,1]}\|B\|\sup_{k\in\mathbb N}|g_k(x)|\leq\sup_{k\in\mathbb N}\|g_k\|<+\infty.
\end{equation*}
Thus $B_*\in B([0,1],\mathbb R)$.  Since $B\in\mathfrak M$, it follows that $B_*\in\mathcal M([0,1],\mathbb R)$. Moreover, it is easy to check that $B_*(0)=B_*(1)=0$. In consequence, $B_*\in\mathcal M_0^0$. 

Applying properties of medial limits we obtain
\begin{align*}
TB_*(x)&=\int_{\Omega}B\left(\big(g_k(f(x,\omega))\big)_{k\in\mathbb N}\right)dP(\omega)
=B\left(\Big(\int_{\Omega}g_k(f(x,\omega))dP(\omega)\Big)_{k\in\mathbb N}\right)\\
&=B\left(\Big(\sum_{l=0}^{k-1}\int_{\Omega}T^{l}g(f(\omega,x))dP(\omega)\Big)_{k\in\mathbb N}\right)=B\left(\Big(\sum_{l=0}^{k-1}T^{l+1}g(x)\Big)_{k\in\mathbb N}\right)\\
&=B\left(\big(g_{k+1}(x)\big)_{k\in\mathbb N}-(g(x))_{k\in\mathbb N}\right)=B_*(x)-g(x).
\end{align*}
for every $x\in[0,1]$. 
\end{proof}

We now want to find conditions under which $B_*\in\mathcal B_0^0$. The situation is similar to that for $B_h\in\mathcal B_a^b$. Namely, to prove that $B_*\in\mathcal B_0^0$, we would have to show that $TB_*\in\mathcal B_0^0$, but by Lemma \ref{lemB*} we have $TB_*=B_*-g$. This leads us to the following definition. 

We say that a function $g\in\mathcal B_0^0$ is {\it admissible for $B\in\mathfrak B$}, if the family $\mathcal G$ is bounded and $B_*\in\mathcal B_0^0$. 

Note that the assumption on boundedness of $\mathcal G$ in the admissibility definition is not restrictive, because if the family $\mathcal G$ is unbounded, then $B_*$ can not be a solution of equation \eqref{E} by Lemma \ref{lemG}.

Before we give examples of conditions guaranteeing admissibility of a given function under a Banach limit, let us recall the definition of almost convergence of sequences. Namely, a bounded sequence $(x_{m})_{m\in\mathbb N}$ of real numbers is said to be almost convergent to a real number $x$ if $B((x_{m})_{m\in\mathbb N})=x$ for any $B\in\mathfrak B$. The sequence $(0,1,0,1,0,1,\ldots)$ is a simple example of a non-convergent sequence which is almost convergent. However almost none of the sequences consisting of $0$'s and $1$'s are almost convergent (see \cite{C1990}). It is proved in \cite{L1948} that a sequence $(x_{k})_{k\in\mathbb N}$ is almost convergent to $x$ if and only if $\lim_{n\to \infty }\frac {1}{n}\sum_{m=0}^{n-1}x_{k+m}=x$ uniformly in $k$. Therefore, for a given $x\in[0,1]$ there exists $y\in\mathbb R$ such that  $B(x)=y$ for every $B\in\mathfrak B$ if and only if 
\begin{equation*}
\lim_{n\to\infty}\left(\sum_{l=0}^{n+k-2}T^lg(x)-\sum_{l=k}^{n+k-2}\frac{l+1-k}{n}T^lg(x)\right)=y\quad\hbox{uniformly in }k.
\end{equation*}

\begin{example}\label{exA}
Assume that $\mathcal G\subset\mathcal B_0^0$. If the series $\sum_{l=0}^{\infty}T^lg$ pointwise almost converges to a function from $\mathcal B_0^0$, then $g$ is admissible for every $B\in\mathfrak B$. 
\end{example}

Observe that if $\mathcal G\subset\mathcal B_0^0$ and if the series $\sum_{l=0}^{\infty}T^lg$ pointwise converges to a bounded function, then the series pointwise almost converges to the same bounded function and $B_*=\sum_{l=0}^{\infty}T^lg$ for any $B\in\mathfrak B$. Moreover, since
\begin{equation*}
B_*(x)=B((T^mg_{k}(x))_{k\in\mathbb N})+\sum_{l=0}^{m}T^lg(x)
\end{equation*}
for all $x\in[0,1]$, $B\in\mathfrak B$ and $m\in\mathbb N$, it follows that for a fixed $x\in[0,1]$  the series $\sum_{l=0}^{\infty}T^lg(x)$ converges if and only if the limit $\lim_{m\to\infty}B((T^mg_{k}(x))_{k\in\mathbb N})$ exists for every $B\in\mathfrak B$.

\begin{example}\label{exB}
Assume that $\mathcal B([0,1],\mathbb R)=B([0,1],\mathbb R)$. Then every function $g\in\mathcal B_0^0$ guaranteeing boundedness of the family $\mathcal G$ is admissible for any $B\in\mathfrak B$.
\end{example}

\begin{example}\label{exD}
Assume that $g\in\mathcal B_0^0$ and there exists $m\in\mathbb N$ such that
\begin{equation}\label{Tmg=0}
T^{m}g=0.
\end{equation} 
Then $\mathcal G=\big\{\sum_{l=0}^{k-1}T^lg:k\in\{1,\ldots,m\}\big\}$ and $B_*=\sum_{l=0}^{m-1}T^lg$ for any $B\in\mathfrak B$. Therefore $g$ is admissible for any $B\in\mathfrak B$.
\end{example}

Let us note that condition \eqref{Tmg=0} is not very far from a necessary condition for $g$ derived in Lemma \ref{lemBg}, which says that $B\big((T^mg(x))_{m\in\mathbb N}\big)=0$ for all $x\in[0,1]$ and $B\in\mathfrak B$. 

We now formulate the main result of this paper. 

\begin{theorem}\label{thmmainE}
\begin{enumerate}
\item[\rm (i)] Assume that
\begin{equation}\label{notempty}
\mathfrak{sol}_a^b\eqref{E}\neq\emptyset.
\end{equation} 
Then for every $B\in\mathfrak M$ we have $\mathfrak{sol}_a^b\eqref{E}\subset\left\{B_h+B_*:h\in\mathcal B_a^b\right\}$.
Moreover, $g$ is admissible for any $B\in\mathfrak B$ under which $\mathcal B_a^b$ is closed.
\item[\rm (ii)] If $\mathcal B_a^b$ is closed under $B\in\mathfrak M$ and $g\in\mathcal B_0^0$ is admissible for $B$, then $\mathfrak{sol}_a^b\eqref{E}=\left\{B_h+B_*:h\in\mathcal B_a^b\right\}$. 
\item[\rm (iii)]  If $g\in\mathcal B_0^0$ is admissible for $B\in\mathfrak B$, then
$\mathfrak{sol}_a^b\eqref{E}=\mathfrak{sol}_a^b\eqref{E0}+B_*$.
\end{enumerate}	
\end{theorem}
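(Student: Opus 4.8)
The backbone of the argument is the superposition principle for the affine equation \eqref{E}. Because $T$ is linear, the difference of any two solutions of \eqref{E} solves \eqref{E0}, and adding to a solution of \eqref{E} any solution of \eqref{E0} lying in $\mathcal B_0^0$ (so that the boundary values $a,b$ are preserved) again produces a solution of \eqref{E}; both verifications are one-line computations with \eqref{eT}. The second, decisive ingredient is an explicit identity relating an arbitrary solution to $B_\varphi$ and $B_*$. Iterating \eqref{eT} gives $T^m\varphi=\varphi-g_m$ for every $m$, so the bounded sequence $(T^m\varphi(x))_{m\in\mathbb N}$ equals $(\varphi(x)-g_m(x))_{m\in\mathbb N}$; applying $B$ and using only its linearity and its value on constant sequences yields
\begin{equation*}
B_\varphi=\varphi-B_*,\qquad\text{i.e.}\qquad \varphi=B_\varphi+B_*,
\end{equation*}
an identity valid for \emph{every} $B\in\mathfrak B$ (here $B_*$ is well defined since $\mathcal G$ is bounded by Lemma \ref{lemG}).

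Part (i) follows almost at once. For the inclusion $\mathfrak{sol}_a^b\eqref{E}\subset\{B_h+B_*:h\in\mathcal B_a^b\}$ I take $h=\varphi$: since $\varphi\in\mathcal B_a^b$, the displayed identity gives $\varphi=B_\varphi+B_*=B_h+B_*$. For the admissibility claim, $\mathcal G$ is bounded by Lemma \ref{lemG}, so only $B_*\in\mathcal B_0^0$ must be checked; writing $B_*=\varphi-B_\varphi$ and using that $\mathcal B_a^b$ is closed under $B$ (whence $B_\varphi\in\mathcal B_a^b$) together with $\varphi\in\mathcal B_a^b$, the difference lies in the subspace $\mathcal B([0,1],\mathbb R)$ and vanishes at $0$ and $1$, so $B_*\in\mathcal B_0^0$.

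Part (ii) I would split into the two inclusions. For $\supset$, fix $B\in\mathfrak M$ and $h\in\mathcal B_a^b$: Lemma \ref{lemBh} gives $TB_h=B_h$ while closedness gives $B_h\in\mathcal B_a^b$, so $B_h\in\mathfrak{sol}_a^b\eqref{E0}$; Lemma \ref{lemB*} gives $TB_*+g=B_*$ while admissibility gives $B_*\in\mathcal B_0^0$. Superposition then shows $B_h+B_*$ solves \eqref{E}, has boundary values $a,b$, and lies in $\mathcal B_a^b$. The inclusion $\subset$ is vacuous when $\mathfrak{sol}_a^b\eqref{E}=\emptyset$ and is otherwise exactly part (i). (The computation for $\supset$ also shows the solution set is nonempty whenever $\mathcal B_a^b\neq\emptyset$.)

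Part (iii) I would reduce to the single assertion that $B_*$ is itself a solution of \eqref{E} belonging to $\mathcal B_0^0$. Granting this, the equality is pure superposition: for $\Phi\in\mathfrak{sol}_a^b\eqref{E0}$ the sum $\Phi+B_*$ solves \eqref{E} with boundary values $a,b$, giving $\supset$, and for $\psi\in\mathfrak{sol}_a^b\eqref{E}$ the difference $\psi-B_*$ solves \eqref{E0} and lies in $\mathcal B_a^b$, whence $\psi=(\psi-B_*)+B_*$ gives $\subset$. The one genuine obstacle is this preliminary assertion. Membership $B_*\in\mathcal B_0^0$ is admissibility, but the functional identity $TB_*=B_*-g$ amounts, after unwinding the definitions, to the interchange $\int_\Omega B((g_k(f(x,\omega)))_k)\,dP(\omega)=B((\int_\Omega g_k(f(x,\omega))\,dP(\omega))_k)$ of the Banach limit with the integral defining $T$ — precisely the defining property of a medial limit. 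For $B\in\mathfrak M$ it is supplied by Lemma \ref{lemB*}; for a general $B\in\mathfrak B$ it must come from the way admissibility is witnessed (e.g. pointwise or almost convergence of $\sum_l T^lg$, where the interchange collapses to dominated convergence), and this is the only point where more than linearity and normalization of $B$ enters.
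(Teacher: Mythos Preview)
Your argument is correct and follows the paper's proof essentially line for line: the same iterated identity $\varphi=T^k\varphi+g_k$ yielding $\varphi=B_\varphi+B_*$, the same appeal to Lemmas~\ref{lemBh} and~\ref{lemB*} for the reverse inclusion in (ii), and the same superposition for (iii). Your closing remark on (iii) --- that verifying $TB_*=B_*-g$ for a general $B\in\mathfrak B$ requires precisely the medial-limit interchange --- is apt; the paper's own proof of (iii) simply cites Lemma~\ref{lemB*} (which is stated only for $B\in\mathfrak M$), so the same tension between the hypothesis $B\in\mathfrak B$ and the tool invoked is present there and is not further addressed.
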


\begin{proof}
(i) Fix $\varphi\in\mathfrak{sol}_a^b\eqref{E}$ and $B\in\mathfrak B$. Obviously, $B_\varphi$ is well defined. From Lemma \ref{lemG} we conclude that $B_*$ is also well defined. Applying induction to \eqref{eT} we get
\begin{equation}\label{Tkgk}
\varphi=T^k\varphi+g_k
\end{equation}
for every $k\in\mathbb N$, and hence $\varphi(x)=B_\varphi(x)+B_*(x)$ for every $x\in[0,1]$. Thus $\mathfrak{sol}_a^b\eqref{E}\subset\left\{B_h+B_*:h\in\mathcal B_a^b\right\}$. Moreover, if $\mathcal B_a^b$ is closed under $B\in\mathfrak B$, then $B_\varphi\in\mathcal B_a^b$, and making use of \eqref{Tkgk} we obtain that $\sup_{k\in\mathbb N}\|g_k\|\leq2\|\varphi\|$ and $B_*=\varphi-B_\varphi\in\mathcal B_0^0$.

To prove that $\left\{B_h+B_*:h\in\mathcal B_a^b\right\}\subset\mathfrak{sol}_a^b\eqref{E}$ we fix $B\in\mathfrak M$, $h\in\mathcal B_a^b$  and assume that $B_h+B_*\in\mathcal B_a^b$. Then Lemmas \ref{lemBh}, \ref{lemG} and \ref{lemB*} give
\begin{equation*}
T(B_h+B_*)+g=TB_h+TB_*+g=B_h+B_*,
\end{equation*}
which means that $B_h+B_*\in \mathfrak{sol}_a^b\eqref{E}$.

(ii) It suffices to apply assertion (i).

(iii) Fix $\varphi\in\mathfrak{sol}_a^b\eqref{E}$. Lemma \ref{lemB*} jointly with the admissibility of $g$ implies that $\varphi-B_*\in\mathfrak{sol}_a^b\eqref{E0}$. Hence $\varphi=(\varphi-B_*)+B_*\in\mathfrak{sol}_a^b\eqref{E0}+B_*$.
Conversely, fix $\Phi\in\mathfrak{sol}_a^b\eqref{E0}$. Then again Lemma \ref{lemB*} jointly with the admissibility of $g$ implies that $\Phi+B_*\in\mathfrak{sol}_a^b\eqref{E}$.
\end{proof}

\begin{corollary}\label{cormainE}
Assume that $g\in\mathcal B_0^0$ and $\mathcal B_a^b$ is closed under $B\in\mathfrak M$. Then equation \eqref{E} has a solution in $\mathcal B_a^b$ if and only if $g$ is admissible for $B$ and equation \eqref{E0} has a solution in $\mathcal B_a^b$.
\end{corollary}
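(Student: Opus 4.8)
The plan is to prove Corollary \ref{cormainE} as a direct consequence of Theorem \ref{thmmainE}, treating the two implications of the biconditional separately. Throughout I fix $B\in\mathfrak M$ under which $\mathcal B_a^b$ is closed, and I keep in mind that admissibility of $g$ for $B$ carries two requirements by definition: that the family $\mathcal G$ be bounded and that $B_*\in\mathcal B_0^0$.

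For the forward implication, I assume $\mathfrak{sol}_a^b\eqref{E}\neq\emptyset$. The admissibility of $g$ for $B$ then follows immediately from the second sentence of Theorem \ref{thmmainE}(i), since $\mathcal B_a^b$ is closed under $B$; indeed that clause asserts precisely that $g$ is admissible for any such $B$. It remains to produce a solution of equation \eqref{E0} in $\mathcal B_a^b$. Here I invoke Theorem \ref{thmmainE}(iii): having just secured admissibility of $g$, I have $\mathfrak{sol}_a^b\eqref{E}=\mathfrak{sol}_a^b\eqref{E0}+B_*$, and since the left-hand side is nonempty, pick $\varphi\in\mathfrak{sol}_a^b\eqref{E}$ and note that $\varphi-B_*\in\mathfrak{sol}_a^b\eqref{E0}$, so equation \eqref{E0} has a solution in $\mathcal B_a^b$.

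For the reverse implication, I assume $g$ is admissible for $B$ and that $\mathfrak{sol}_a^b\eqref{E0}\neq\emptyset$. Applying Theorem \ref{thmmainE}(iii) once more (its hypothesis, admissibility of $g$, is now in force), I again obtain $\mathfrak{sol}_a^b\eqref{E}=\mathfrak{sol}_a^b\eqref{E0}+B_*$; nonemptiness of $\mathfrak{sol}_a^b\eqref{E0}$ then forces $\mathfrak{sol}_a^b\eqref{E}\neq\emptyset$, giving a solution of \eqref{E} in $\mathcal B_a^b$.

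I do not anticipate a genuine obstacle, since the corollary is essentially a repackaging of parts (i) and (iii) of the theorem; the only point demanding care is logical hygiene. Specifically, Theorem \ref{thmmainE}(iii) presupposes admissibility of $g$, so in the forward direction I must establish admissibility \emph{before} invoking (iii), which is why the order above first extracts admissibility from the closedness clause of part (i) and only then applies part (iii). The reverse direction is cleaner because admissibility is assumed outright. A secondary subtlety worth flagging is that $B_*$ is well defined under the stated hypotheses: in the forward direction the existence of a solution guarantees boundedness of $\mathcal G$ via Lemma \ref{lemG}, while in the reverse direction boundedness of $\mathcal G$ is built into the admissibility hypothesis, so $B_*$ makes sense in both cases and the translation by $B_*$ in part (iii) is legitimate.
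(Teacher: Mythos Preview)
Your proof is correct and follows the approach the paper intends: the corollary is stated without its own proof precisely because it is meant to drop out of Theorem~\ref{thmmainE}, and your derivation from parts (i) and (iii) does exactly that, with the right order of invocation (admissibility from (i) before appealing to (iii)). The only remark is that for the reverse implication you could equally have cited part (ii) (closedness plus admissibility give $\mathfrak{sol}_a^b\eqref{E}=\{B_h+B_*:h\in\mathcal B_a^b\}$, which is nonempty since $\mathfrak{sol}_a^b\eqref{E0}\neq\emptyset$ forces $\mathcal B_a^b\neq\emptyset$), but your use of (iii) is just as clean.
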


\begin{remark}
If $\mathfrak{sol}_a^b\eqref{E}=\emptyset$, then it may happen that there is no $B\in\mathfrak B$ for which $B_*$ is well defined; see e.g. the equation $\varphi(x)=\varphi(x)+1$. Therefore, assumption \eqref{notempty} can not be omitted in assertion (i) of Theorem \ref{thmmainE}. The above exemplary equation also  shows that the admissibility assumption in assertion (iii) of Theorem \ref{thmmainE} is necessary.
\end{remark}


\section{Consequence of the main results}
In this section we formulate some exemplary consequences of the main results, making use of the presented examples and applying some know results on equation \eqref{E}. We begin with the case where $\mathcal A=2^\Omega$.

\begin{corollary}\label{cor1}
Assume 
\begin{enumerate}
\item[\rm (H$_4$)]  $(f_n)_{n\in\mathbb N}$ is a sequence of self-mappings of $[0,1]$ such that $f_n(0)=0$ and $f_n(1)=1$ for every $n\in\mathbb N$, $(p_n)_{n\in\mathbb N}$ is a sequence of nonnegative real numbers summing up to one and $g\in B([0,1],\mathbb R)_0^0$.
\end{enumerate} 
Then the equation
\begin{equation}\label{e}
\varphi(x)=\sum_{n\in\mathbb N}p_n\varphi(f_n(x))+g(x)\tag{e$_g$}
\end{equation}
has a solution in $B([0,1],\mathbb R)$ if and only if the family
\begin{equation}\label{family}
\left\{\sum_{l=1}^{k}\sum_{n_1,\ldots,n_l=0}^Np_{n_1}\cdots p_{n_l}(g\circ f_{n_1}\circ\dots\circ f_{n_l}):k\in\mathbb N\right\}
\end{equation}
is bounded. Moreover, if the family given by \eqref{family}  is bounded, then $\varphi\in B([0,1],\mathbb R)$ is a solution of equation \eqref{e} if and only if
$\varphi=B_h+B_*$ with some $B\in\mathfrak B$ and $h\in B([0,1],\mathbb R)$.
\end{corollary}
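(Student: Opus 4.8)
The plan is to specialize the general machinery of Sections 3 and 4 to the present discrete case $\mathcal A=2^\Omega$, in which equation \eqref{e} is precisely the instance of \eqref{E} obtained when integration reduces to summation; thus all results of those sections apply to it verbatim. First I would record the standing identifications: by Example \ref{ex1} one may take $\mathcal B([0,1],\mathbb R)=B([0,1],\mathbb R)$, so that $\mathfrak M=\mathfrak B$ and, by Example \ref{ex1a}, every class $B([0,1],\mathbb R)_a^b$ is closed under every $B\in\mathfrak B$. Moreover, Example \ref{exB} collapses admissibility to a single condition: a $g\in B([0,1],\mathbb R)_0^0$ is admissible for every $B\in\mathfrak B$ exactly when $\mathcal G$ is bounded. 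A short induction identifies the operator here as $T^lg=\sum_{n_1,\dots,n_l}p_{n_1}\cdots p_{n_l}(g\circ f_{n_1}\circ\cdots\circ f_{n_l})$, so that the $k$-th member of the family \eqref{family} is $\sum_{l=1}^{k}T^lg=Tg_k$; since $g_{k+1}=g+Tg_k$ with $g$ bounded and $\|T\|=1$, boundedness of \eqref{family} is equivalent to boundedness of $\mathcal G$. This bookkeeping is what translates the corollary into the vocabulary of the earlier results.

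For the existence equivalence I would proceed as follows. If \eqref{family}, and hence $\mathcal G$, is bounded, then by Example \ref{exB} the function $g$ is admissible for any fixed $B\in\mathfrak B=\mathfrak M$; Lemma \ref{lemB*} then gives $B_*\in\mathcal M_0^0$ with $B_*=TB_*+g$, so $B_*$ is itself a bounded solution of \eqref{e} and the equation is solvable in $B([0,1],\mathbb R)$. Conversely, if \eqref{e} has a solution $\varphi\in B([0,1],\mathbb R)$, set $a=\varphi(0)$ and $b=\varphi(1)$; then $\mathfrak{sol}_a^b\eqref{E}\neq\emptyset$, and Lemma \ref{lemG} forces $\mathcal G$, hence \eqref{family}, to be bounded.

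For the description of the solution set I would split into two inclusions, assuming throughout that \eqref{family} is bounded. If $\varphi\in B([0,1],\mathbb R)$ solves \eqref{e}, put $a=\varphi(0)$, $b=\varphi(1)$; then \eqref{notempty} holds, and Theorem \ref{thmmainE}(i) applied with any $B\in\mathfrak M$ yields $\varphi=B_h+B_*$ for some $h\in B([0,1],\mathbb R)_a^b\subset B([0,1],\mathbb R)$. Conversely, given $\varphi=B_h+B_*$ with $B\in\mathfrak B$ and $h\in B([0,1],\mathbb R)$, put $a=h(0)$, $b=h(1)$, so $h\in B([0,1],\mathbb R)_a^b$. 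Since this class is closed under $B$ (Example \ref{ex1a}) and $g$ is admissible for $B$ (Example \ref{exB}), Theorem \ref{thmmainE}(ii) gives $\mathfrak{sol}_a^b\eqref{E}=\{B_h+B_*:h\in B([0,1],\mathbb R)_a^b\}$, whence membership of $h$ in this class shows that $\varphi$ is a solution of \eqref{e}.

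The routine computations (the induction for $T^lg$ and the elementary norm estimates) present no difficulty; the one point demanding care is that the corollary is stated over the whole space $B([0,1],\mathbb R)$, whereas Theorem \ref{thmmainE} and Corollary \ref{cormainE} are phrased inside a fixed class $\mathcal B_a^b$. The main obstacle is therefore the $(a,b)$-bookkeeping: I must observe that boundedness of $\mathcal G$ and admissibility of $g$ are independent of $a,b$, and that $B_h$ and $B_*$ automatically carry the correct boundary values by Lemmas \ref{lemBh} and \ref{lemB*} (so that the boundary data of a solution $\varphi$ are free, fixed only through $a=\varphi(0)$, $b=\varphi(1)$). Matching $a,b$ to the boundary values of $\varphi$ (respectively of $h$) is exactly what lets the fixed-class results be glued into a statement about all of $B([0,1],\mathbb R)$.
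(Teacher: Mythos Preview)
Your proposal is correct and follows essentially the same route as the paper: specialize to $\mathcal A=2^\Omega$, take $\mathcal B([0,1],\mathbb R)=B([0,1],\mathbb R)$ via Example~\ref{ex1}, invoke Examples~\ref{ex1a} and~\ref{exB} so that closure under every $B\in\mathfrak B=\mathfrak M$ and admissibility reduce to boundedness of $\mathcal G$, and then read off the conclusion from the results of Sections~3 and~4. Your explicit handling of the $(a,b)$ bookkeeping and the direct appeal to Theorem~\ref{thmmainE}(i),(ii) (rather than to Theorem~\ref{thmmainE0} plus Corollary~\ref{cormainE}, as the paper does somewhat tersely) is a harmless and arguably cleaner variant of the same argument; your identification of the family~\eqref{family} with $\{Tg_k:k\in\mathbb N\}$ and the resulting equivalence with boundedness of $\mathcal G$ is also correct.
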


\begin{proof}
In view of Examples \ref{ex1}, \ref{ex1a} and \ref{exB}, it suffices to apply Theorem \ref{thmmainE0} and Corollary \ref{cormainE} with $\mathcal B([0,1],\mathbb R)=B([0,1],\mathbb R)$ and arbitrary $B\in\mathfrak B$.
\end{proof}

Now we show a possible application of Corollary \ref{cor1}.

\begin{example}
Fix  $N\in\mathbb N$, real numbers $p_0,\ldots,p_N\geq 0$ summing up to one and a function $f\colon[0,1]\to[0,1]$ such that $f(0)=0$, $f(1)=1$ and $f^{N+1}(x)=x$ for every $x\in[0,1]$; for a full description of such functions see \cite[Theorem 15.1]{K1968}. Then consider the following functional equation
\begin{eqnarray}\label{eq1}
\varphi(x)=\sum_{n=0}^{N}p_n\varphi(f^n(x))+g(x),
\end{eqnarray}
which is discussed in more details in \cite[Chapter XIII]{K1968} and in \cite[Subsections 6.3 and 6.7]{KCG1990}).

For all $n\in\{0,\ldots,N\}$ and $m\in\mathbb N$ define recursively numbers $\alpha_{m,n}$ putting
\begin{equation*}
\alpha_{1,n}=p_n\quad\hbox{ and }\quad
\alpha_{m+1,n}=\sum_{k=0}^N\alpha_{m,k}p_{(n-k)\hspace*{-1.9ex}\mod\!\!(N+1)}.
\end{equation*}
Fix $h\in B([0,1],\mathbb R)$ and $B\in\mathfrak B$. Applying induction we obtain 
\begin{equation*}\label{Tmh}
T^mh(x)=\sum_{n=0}^{N}\alpha_{m,n}h(f^n(x))
\end{equation*} 
for all $m\in\mathbb N$ and $x\in[0,1]$. Therefore, from Corollary \ref{cor1} we infer that equation \eqref{eq1} has a solution in $B([0,1],\mathbb R)$ if and only if the family
\begin{equation}\label{family1}
\left\{\sum_{n=0}^{N}\Big(\sum_{m=1}^{k}\alpha_{m,n}\Big)g\circ f^n:k\in\mathbb N\right\}
\end{equation}
is bounded. Moreover, if the family \eqref{family1} is bounded, then $\varphi\colon[0,1]\to\mathbb R$ is a bounded solution of equation \eqref{eq1} if and only if
\begin{equation*}
\varphi(x)=\sum_{n=0}^{N}B\left(\big(\alpha_{m,n}\big)_{m\in\mathbb N}\right)h(f^n(x))+g(x)+
B\left(\sum_{n=0}^{N}g(f^n(x))\Big(\sum_{m=1}^{k}\alpha_{m,n}\Big)_{k\in\mathbb N}\right)
\end{equation*}
with some $h\in B([0,1],\mathbb R)$ and $B\in\mathfrak B$. 

If $p_0=\dots=p_N=\frac{1}{N+1}$, then $\alpha_{m,n}=\frac{1}{N+1}$ for all $m\in\mathbb N$ and $n\in\{0,\ldots,N\}$, and hence the family \eqref{family1} is bounded if and only if 
\begin{equation}\label{condeq1}
\sum_{n=0}^{N}g(f^n(x))=0\quad\hbox{ for every }x\in[0,1];
\end{equation} cf. Example \ref{exD}. In consequence, equation \eqref{eq1} with $p_0=\dots=p_N=\frac{1}{N+1}$ has a solution $\varphi\in B([0,1],\mathbb R)$ if and only if \eqref{condeq1} holds, and moreover,
\begin{equation*}
\varphi(x)=\frac{1}{N+1}\sum_{n=0}^{N}h(f^n(x))+g(x)
\end{equation*}
with some $h\in B([0,1],\mathbb R)$.
\end{example} 

Purely bounded solutions of equation \eqref{E} are considered rather rarely. Usually some additional property is requited, such as monotonicity (see e.g. \cite{KM2009,KM2010,MR2008}), Borel measurability (see e.g. \cite{B2009b,BK2009}), continuity at a point (see e.g. \cite{BJ2004}). The next two corollaries concern just such cases. To formulate the first one we need some notion. Namely, following \cite{BK1977} (cf. \cite{D1977}) we define iterates of a function $h\colon[0,1]\times\Omega\to[0,1]$ as follows
\begin{equation*}
h(x,\omega)=h(x,\omega_1)\quad\hbox{ and }\quad
h^{n+1}(x,\omega)=h(h^n(x,\omega),\omega_{n+1})
\end{equation*}
for all $x\in[0,1]$, $\omega=(\omega_1,\omega_2,\ldots)\in\Omega^\infty$ and $n\in\mathbb N$. Note that if $h$ is an rv-function, then all its iterates are also rv-functions defined on the product space $(\Omega^\infty,\mathcal A^\infty,P^\infty)$. 

\begin{corollary}\label{cor3}
Assume that $f$ is an rv-function such that the function $f(\cdot,\omega)$ is continuous at $0$ and $1$ for every $\omega\in\Omega$ and the function $m\colon[0,1]\to[0,1]$ defined by $m(x)=\int_\Omega f(x,\omega)dP(\omega)$ is continuous with $m(x)\neq x$ for every $x\in(0,1)$. Let $g\in B([0,1],\mathbb R)_0^0$ be Borel measurable continuous at $0$ and $1$, let the family $\mathcal G$ be bounded, and let $B$ be a medial limit with respect to a probability Borel measure on $[0,1]$ such that $B_*$ is continuous at $0$ and $1$. If $\varphi\in B([0,1],\mathbb R)_0^1$ is a Borel measurable, continuous at $0$ and $1$ solution of equation \eqref{E}, then
\begin{equation*}
\varphi(x)=P^\infty\left(\lim_{n\to\infty}f^n(x,\cdot)=1\right)+B_*(x)
\end{equation*}
for every $x\in[0,1]$. 
\end{corollary}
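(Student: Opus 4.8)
The plan is to reduce the inhomogeneous problem to the homogeneous one by Theorem \ref{thmmainE}(iii) and then to identify the resulting solution of \eqref{E0} by a martingale argument on the product space. First I would fix the working space $\mathcal B([0,1],\mathbb R)=BM([0,1],\mathbb R)\cap C_0([0,1],\mathbb R)\cap C_1([0,1],\mathbb R)$. Since $f$ is an rv-function and $f(\cdot,\omega)$ is continuous at $0$ and at $1$, Examples \ref{ex7} and \ref{ex2} show that \eqref{T} holds for this choice, and the given solution $\varphi$ lies in $\mathcal B_0^1$. Next I would verify that $g$ is admissible for $B$: the family $\mathcal G$ is bounded by hypothesis, while $B_*$ is bounded, satisfies $B_*(0)=B_*(1)=0$ and lies in $\mathcal M$ by Lemma \ref{lemB*}, is continuous at $0$ and $1$ by hypothesis, and is Borel measurable by the same reasoning as in Example \ref{ex7a} applied to the bounded sequence $(g_k)$ of Borel functions. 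Hence $B_*\in\mathcal B_0^0$, $g$ is admissible, and Theorem \ref{thmmainE}(iii) gives that $\Phi:=\varphi-B_*$ is a bounded Borel measurable solution of \eqref{E0}, continuous at $0$ and $1$, with $\Phi(0)=0$ and $\Phi(1)=1$.

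It then remains to prove that such a $\Phi$ equals $P^\infty(\lim_{n\to\infty}f^n(x,\cdot)=1)$. I would argue on $(\Omega^\infty,\mathcal A^\infty,P^\infty)$. Fix $x$, put $X_0=x$, $X_n=f^n(x,\cdot)$ for $n\ge1$, $M_n=\Phi(X_n)$, and let $\mathcal F_n=\sigma(\omega_1,\dots,\omega_n)$; note that $X_n$ is measurable because iterates of an rv-function are rv-functions. Since $X_{n+1}=f(X_n,\omega_{n+1})$ with $\omega_{n+1}$ independent of $\mathcal F_n$ and $X_n$ being $\mathcal F_n$-measurable, conditioning yields $\mathbb E[M_{n+1}\mid\mathcal F_n]=T\Phi(X_n)=\Phi(X_n)=M_n$, so $(M_n)$ is a bounded martingale; the same computation with $\Phi$ replaced by the identity gives $\mathbb E[X_{n+1}\mid\mathcal F_n]=m(X_n)$. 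By the martingale convergence theorem $M_n\to M_\infty$ almost surely and in $L^1$, so $\Phi(x)=\mathbb E[M_0]=\mathbb E[M_\infty]$.

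The crucial point, which I expect to be the main obstacle, is to show that $X_n$ converges almost surely to a $\{0,1\}$-valued limit; this is where the hypotheses on $m$ enter decisively. Since $m$ is continuous with $m(0)=0$, $m(1)=1$ and $m(x)\neq x$ on $(0,1)$, the continuous function $x\mapsto m(x)-x$ has constant sign on $(0,1)$; assume first $m(x)>x$ there, the case $m(x)<x$ being symmetric. Then $(X_n)$ is a bounded submartingale, hence $X_n\to X_\infty$ almost surely. Continuity of $m$ gives $m(X_n)\to m(X_\infty)$, and from $\mathbb E[m(X_n)-X_n]=\mathbb E[X_{n+1}]-\mathbb E[X_n]\to0$ (the bounded sequence $\mathbb E[X_n]$ being monotone) together with bounded convergence one obtains $\mathbb E[m(X_\infty)-X_\infty]=0$; as $m(y)-y>0$ for $y\in(0,1)$, this forces $X_\infty\in\{0,1\}$ almost surely.

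Finally I would combine the two limits. On $\{X_\infty=1\}$ we have $X_n\to1$, so continuity of $\Phi$ at $1$ gives $M_n\to\Phi(1)=1$; on $\{X_\infty=0\}$ continuity at $0$ gives $M_n\to\Phi(0)=0$. Hence $M_\infty=\mathbbm 1_{\{X_\infty=1\}}=\mathbbm 1_{\{\lim_n f^n(x,\cdot)=1\}}$ almost surely, and taking expectations yields $\Phi(x)=\mathbb E[M_\infty]=P^\infty(\lim_{n\to\infty}f^n(x,\cdot)=1)$, which together with $\varphi=\Phi+B_*$ is the claimed formula. The two delicate steps are the almost-sure absorption at $\{0,1\}$, governed entirely by the assumptions on $m$, and the verification that $B_*$ is genuinely Borel measurable so that the admissibility hypothesis of Theorem \ref{thmmainE}(iii) is met.
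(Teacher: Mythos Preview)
Your reduction coincides with the paper's: both choose $\mathcal B([0,1],\mathbb R)=BM([0,1],\mathbb R)\cap C_0([0,1],\mathbb R)\cap C_1([0,1],\mathbb R)$, verify that $g$ is admissible for $B$, and apply Theorem~\ref{thmmainE}(iii) to obtain $\Phi:=\varphi-B_*\in\mathfrak{sol}_0^1\eqref{E0}$. The divergence is in the identification of $\Phi$. The paper simply invokes an external result (\cite[Proposition~2.1 and Corollary~2.4]{BJ2004}) asserting that $\mathfrak{sol}_0^1\eqref{E0}$ is the singleton $\{\Phi\}$ with $\Phi(x)=P^\infty(\lim_n f^n(x,\cdot)=1)$, whereas you supply a self-contained martingale argument: $(\Phi(X_n))$ is a bounded martingale because $\Phi$ solves \eqref{E0}; the sign condition on $m(x)-x$ makes $(X_n)$ a bounded sub- or supermartingale, hence almost surely convergent; the telescoping estimate $\mathbb E[m(X_n)-X_n]\to0$ together with continuity of $m$ forces $X_\infty\in\{0,1\}$; and continuity of $\Phi$ at the endpoints then pins down $M_\infty=\mathbbm 1_{\{X_\infty=1\}}$. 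This is correct and is, in essence, a reconstruction of the argument in \cite{BJ2004}. Your version has the advantage of being self-contained and of making transparent exactly where each hypothesis on $m$ and on the boundary continuity is consumed; the paper's version is shorter but hides the probabilistic mechanism behind a citation.
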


\begin{proof}
Choose $\mathcal B([0,1],\mathbb R)=BM([0,1],\mathbb R)\cap C_0([0,1],\mathbb R)\cap C_1([0,1],\mathbb R)$; this is possible in view of Examples \ref{ex7} and \ref{ex2}. According to \cite[Proposition 2.1 and Corollary 2.4]{BJ2004} we have $\mathfrak{sol}_a^b\eqref{E0}=\{\Phi\}$, where $\Phi(x)=P^\infty\left(\lim_{n\to\infty}f^n(x,\cdot)=1\right)$ for every $x\in[0,1]$. Finally, since $g$ is admissible for $B$, it is enough to apply Theorem \ref{thmmainE}(iii).
\end{proof}

\begin{corollary}\label{cor2}
Assume {\rm (H$_4$)}. Let $x_0\in\{0,1\}$ and let there exists $\eta>0$ such that $\frac{f_n(x)-f_n(x_0)}{x-x_0}\leq 1$ for all $n\in\mathbb N$ and $x\in(0,1)$ with $|x-x_0|\leq\eta$. If $g\in C_{x_0}([0,1],\mathbb R)_0^0$ and the series $\sum_{l=0}^{\infty}T^lg$ converges uniformly, then $\varphi\in C_{x_0}([0,1],\mathbb R)$ is a solution of equation \eqref{e} if and only if there exists $h\in C_{x_0}([0,1],\mathbb R)$ such that $\varphi=B_h+\sum_{l=0}^{\infty}T^lg$ with an arbitrary $B\in\mathfrak B$.
\end{corollary}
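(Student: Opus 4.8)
The plan is to recognize equation \eqref{e} as the special case of \eqref{E} in which $\Omega=\mathbb N$, $\mathcal A=2^{\mathbb N}$, $P(\{n\})=p_n$ and $f(x,n)=f_n(x)$; in this discrete setting $\mathfrak M=\mathfrak B$, so every Banach limit is simultaneously a medial limit. I would fix an arbitrary $B\in\mathfrak B$ and work throughout with $\mathcal B([0,1],\mathbb R)=C_{x_0}([0,1],\mathbb R)$. The standing hypothesis on the difference quotients $\frac{f_n(x)-f_n(x_0)}{x-x_0}\leq 1$ is precisely condition (H$_3$) of Example \ref{ex2a}; since it forces each $f_n$ to be continuous at $x_0$, the space $C_{x_0}([0,1],\mathbb R)$ is $T$-invariant by Example \ref{ex2}, and Example \ref{ex2a} then gives that $C_{x_0}([0,1],\mathbb R)_a^b$ is closed under every $B\in\mathfrak B$, for every pair $a,b$.

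Next I would verify that $g$ is admissible. Because $T$ maps $C_{x_0}([0,1],\mathbb R)_0^0$ into itself, each partial sum $g_k=\sum_{l=0}^{k-1}T^lg$ lies in $C_{x_0}([0,1],\mathbb R)_0^0$, whence $\mathcal G\subset\mathcal B_0^0$. The assumed uniform convergence of $\sum_{l=0}^\infty T^lg$ makes $\mathcal G$ bounded and forces the limit function $\varphi_*:=\sum_{l=0}^\infty T^lg$, as a uniform limit of functions from $C_{x_0}([0,1],\mathbb R)_0^0$, to be continuous at $x_0$ with $\varphi_*(0)=\varphi_*(1)=0$; thus $\varphi_*\in\mathcal B_0^0$. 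Uniform convergence yields pointwise convergence, hence pointwise almost convergence to the same function, so by the remark following Example \ref{exA} we get $B_*=\sum_{l=0}^\infty T^lg$ for every $B\in\mathfrak B$, and by Example \ref{exA} the function $g$ is admissible for every $B\in\mathfrak B$.

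With both structural hypotheses of Theorem \ref{thmmainE}(ii) now in force, the equivalence follows by splitting according to boundary values. For the forward implication I would take a solution $\varphi\in C_{x_0}([0,1],\mathbb R)$ of \eqref{e}, set $a=\varphi(0)$ and $b=\varphi(1)$ so that $\varphi\in\mathfrak{sol}_a^b\eqref{E}$, and read off from Theorem \ref{thmmainE}(ii) an $h\in\mathcal B_a^b\subset C_{x_0}([0,1],\mathbb R)$ with $\varphi=B_h+B_*=B_h+\sum_{l=0}^\infty T^lg$. For the converse I would start from an arbitrary $h\in C_{x_0}([0,1],\mathbb R)$, put $a=h(0)$ and $b=h(1)$, and note that $B_h+B_*$ belongs to $\{B_h:h\in\mathcal B_a^b\}+B_*=\mathfrak{sol}_a^b\eqref{E}$ by the same theorem, hence is a solution.

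The argument is essentially a bookkeeping application of the earlier machinery, so the only genuinely substantive step is the admissibility verification of the second paragraph: one must ensure that uniform convergence delivers not merely boundedness of $\mathcal G$ but membership of the limit in the class $C_{x_0}([0,1],\mathbb R)_0^0$ together with the $B$-independent identification $B_*=\sum_{l=0}^\infty T^lg$. The mild subtlety in the last paragraph is that the boundary values are not prescribed in the statement, so the decomposition theorem must be applied to the pair $(a,b)$ read off from the function at hand rather than to a fixed pair.
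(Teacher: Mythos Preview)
Your proposal is correct and follows essentially the same route as the paper: identify \eqref{e} with the discrete case of \eqref{E} (so $\mathfrak M=\mathfrak B$), take $\mathcal B([0,1],\mathbb R)=C_{x_0}([0,1],\mathbb R)$ via Examples \ref{ex2} and \ref{ex2a}, deduce admissibility of $g$ from uniform convergence via Example \ref{exA} and the remark after it, and then invoke Theorem \ref{thmmainE}(ii). Your explicit handling of the unspecified boundary values $(a,b)$ by reading them off from $\varphi$ or $h$ is a useful clarification that the paper leaves implicit.
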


\begin{proof}
The uniform convergence of the series $\sum_{l=0}^{\infty}T^lg$ implies its pointwise almost convergence to a function from the class $C_{x_0}([0,1],\mathbb R)$ as well as the boundedness of the family $\mathcal G$. Now it is enough to apply Theorems \ref{thmmainE0} and \ref{thmmainE} with $\mathcal B([0,1],\mathbb R)=C_{x_0}([0,1],\mathbb R)$ and an arbitrary $B\in\mathfrak B$, which is possible in view of Examples \ref{ex2}, \ref{ex2a} and \ref{exA}.
\end{proof}

The next example is in the spirit of the idea of the manuscript \cite{S} with the use of Corollary \ref{cor2}.

\begin{example}
Assume (H$_4$) with $g\in C_0([0,1],\mathbb R)$ and let there exists $\alpha>1$ such that $f_n(x)\leq x^\alpha$ for all $n\in\mathbb N$ and $x\in[0.1]$. Then consider equation \ref{e} and its solutions in the class $C_0([0,1],\mathbb R)$.

Fix $h\in C_0([0,1],\mathbb R)$ and $x\in(0,1)$. By induction on $m$ we obtain $T^mh(x)=\sum_{n_1,\ldots,n_m\in\mathbb N}p_{n_1}\cdots p_{n_m}h(f_{n_1}(\ldots( f_{n_m}(x))\ldots))$ and $f_{n_1}(\ldots (f_{n_m}(x))\ldots)\leq x^{\alpha^m}$ for every $m\in\mathbb N$. Thus $\lim_{m\to\infty}T^mh(x)=0$, and hence
\begin{equation*}
B_h(x)=B((T^mh(x))_{m\in\mathbb N})=\begin{cases}h(0),&\hbox{if }x\in[0,1),\\h(1),&\hbox{if }x=1\end{cases}
\end{equation*} 
for every $B\in\mathfrak B$. If the series $\sum_{l=0}^{\infty}T^lg(x)$ uniformly converges, then Corollary \ref{cor2} implies that every solution $\varphi\in C_0([0,1],\mathbb R)$ of equation \eqref{e} is of the form 
\begin{equation*}
\varphi(x)=\begin{cases}
a+\sum_{l=0}^{\infty}T^lg(x),&\hbox{if }x\in[0,1),\\b,&\hbox{if }x=1,\end{cases}
\end{equation*}
where $a,b\in\mathbb R$.
\end{example}

Lipschitzian solutions of equation \eqref{E}, in a more general setting than in this paper, were recently examined in \cite{B2009a,BKM2016,BM2016,BM2017}. However, the next Corollary gives a general formulae for a wide class of Lipschitzian solutions of equation \eqref{E}, in contrast to the papers mentioned, in which assumptions made force uniqueness or uniqueness up to an additive constant of Lipschitzian solutions of the equation considered. 

\begin{corollary}\label{cor4}
Assume $({\rm H}_2)$ and let $g\in Lip([0,1],\mathbb R)$. Then equation \eqref{E} has a solution in $Lip([0,1],\mathbb R)$ if and only if $g$ is admissible for $B\in\mathfrak M$. Moreover, every solution $\varphi\in Lip([0,1],\mathbb R)$ of equation \eqref{E} is of the form $\varphi=B_h+B_*$ with some $h\in Lip([0,1],\mathbb R)$ and $B\in\mathfrak M$.
\end{corollary}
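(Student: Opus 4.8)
The plan is to specialize the general machinery of Section~4 to the choice $\mathcal B([0,1],\mathbb R)=Lip([0,1],\mathbb R)$, which is legitimate by Example~\ref{ex4} (here $(\mathrm{H}_2)$ guarantees the invariance \eqref{T}), and then to exploit the strong closure property recorded in Example~\ref{ex4a}: under $(\mathrm{H}_2)$ the class $Lip([0,1],\mathbb R)_a^b$ is closed under \emph{every} Banach limit $B\in\mathfrak B$, for all boundary values $a,b$. This uniform closure is what makes the Lipschitz case clean, since it strips the admissibility hypothesis out of one direction of the argument.

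For the existence equivalence I would argue as follows. Suppose first that \eqref{E} has a solution $\varphi\in Lip([0,1],\mathbb R)$, and put $a=\varphi(0)$, $b=\varphi(1)$; then $\varphi\in\mathfrak{sol}_a^b\eqref{E}$, so in particular $\mathfrak{sol}_a^b\eqref{E}\neq\emptyset$. Since $Lip_a^b$ is closed under every $B\in\mathfrak B$ by Example~\ref{ex4a}, the ``Moreover'' clause of Theorem~\ref{thmmainE}(i) yields that $g$ is admissible for every $B\in\mathfrak B$, hence for every $B\in\mathfrak M$. Conversely, suppose $g$ is admissible for some $B\in\mathfrak M$. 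I would apply Corollary~\ref{cormainE} with $a=b=0$: the class $Lip_0^0$ is closed under $B$ by Example~\ref{ex4a}, we have $g\in Lip_0^0$, and equation \eqref{E0} always has a solution in $Lip_0^0$, namely the zero function (indeed $T0=0$). Corollary~\ref{cormainE} then produces a solution of \eqref{E} in $Lip_0^0\subset Lip([0,1],\mathbb R)$, which settles the asserted equivalence.

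The representation formula in the ``Moreover'' part I would read off directly from Theorem~\ref{thmmainE}(i). Fix an arbitrary $B\in\mathfrak M$ and let $\varphi\in Lip([0,1],\mathbb R)$ be any solution of \eqref{E}; setting $a=\varphi(0)$ and $b=\varphi(1)$ we again have $\varphi\in\mathfrak{sol}_a^b\eqref{E}\neq\emptyset$, and Theorem~\ref{thmmainE}(i) gives the inclusion $\mathfrak{sol}_a^b\eqref{E}\subset\{B_h+B_*:h\in\mathcal B_a^b\}$. Hence $\varphi=B_h+B_*$ for some $h\in Lip_a^b\subset Lip([0,1],\mathbb R)$, as claimed.

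The one point requiring genuine care---rather than a deep obstacle---is the bookkeeping of the boundary values. The corollary is phrased for the whole class $Lip([0,1],\mathbb R)$, whereas the results of Section~4 are stated for the affine classes $\mathcal B_a^b$ with $a,b$ fixed. The reconciliation is that a Lipschitz solution determines its own boundary data $a=\varphi(0)$, $b=\varphi(1)$, while the converse (existence) direction can always be realized in $Lip_0^0$ because \eqref{E0} is solved there by the trivial function; admissibility of $g$ is precisely the bridge transferring solvability across the different choices of $a,b$. No hypothesis on $f$ or $g$ beyond $(\mathrm{H}_2)$ and $g\in Lip([0,1],\mathbb R)$ is needed, exactly because Example~\ref{ex4a} furnishes closure under all Banach limits simultaneously.
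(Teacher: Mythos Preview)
Your proof is correct and follows the same overall architecture as the paper (specialize to $\mathcal B([0,1],\mathbb R)=Lip([0,1],\mathbb R)$ via Examples~\ref{ex4} and~\ref{ex4a}, then invoke Corollary~\ref{cormainE} and Theorem~\ref{thmmainE}), but there is one genuine difference worth noting. For the converse direction of the equivalence, the paper does not use the zero solution of \eqref{E0} in $Lip_0^0$. Instead it first derives from $(\mathrm{H}_2)$ and \eqref{cond} the identity $\int_\Omega f(x,\omega)\,dP(\omega)=x$ for all $x\in[0,1]$, and observes that consequently every piecewise affine function solves \eqref{E0}; in particular \eqref{E0} has a Lipschitz solution in $Lip_a^b$ for \emph{every} pair $a,b$. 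This is a strictly stronger fact than what you use, and it is of independent interest (it shows the solution set of \eqref{E0} in $Lip$ is large), but for the corollary as stated your shortcut through $0\in\mathfrak{sol}_0^0\eqref{E0}$ is entirely sufficient and more economical. Your handling of the ``Moreover'' clause via Theorem~\ref{thmmainE}(i) is also slightly more direct than the paper's appeal to Theorem~\ref{thmmainE}(ii), since you avoid invoking admissibility a second time.
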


\begin{proof}
First note that (H$_2$) jointly with \eqref{cond} yields
\begin{equation*}\label{fxx}
\int_{\Omega}f(x,\omega)dP(\omega)=x\quad\hbox{ for every }x\in [0,1].
\end{equation*}
This condition implies that each piecewise affine function is a solution of equation \eqref{E0}. In particular, equation \eqref{E0} has a Lipschitzian solution. Now, in view of Examples \ref{ex4} and \ref{ex4a}, it suffice to apply Corollary \ref{cormainE} and Theorem \ref{thmmainE}(ii) with $\mathcal B([0,1],\mathbb R)=Lip([0,1],\mathbb R)$ and suitable $B\in\mathfrak M$.
\end{proof}

The next corollary gives a formulae for the general solution of equation \eqref{E} in the space $BV([0,1],\mathbb R)$, and hence, partially solves the problem considered in \cite{M1985} for a very spacial case of equation \eqref{E0}. 

\begin{corollary}\label{cor5}
Assume $({\rm H}_1)$. Let $g\in BV([0,1],\mathbb R)$. Then equation \eqref{E} has a solution in $BV([0,1],\mathbb R)$ if and only if $g$ is admissible for some $B\in\mathfrak M$ and equation \eqref{E0} has a solution $\Phi\in BV([0,1],\mathbb R)$. Moreover, $\varphi\in BV([0,1],\mathbb R)$ satisfies \eqref{E} if and only if $\varphi=B_h+B_*$ with some $h\in BV([0,1],\mathbb R)$ and $B\in\mathfrak M$.
\end{corollary}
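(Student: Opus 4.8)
The plan is to reduce the statement, which is phrased in the whole space $BV([0,1],\mathbb R)$, to the boundary-value classes $\mathcal B_a^b=BV([0,1],\mathbb R)_a^b$ and then feed everything into the machinery already assembled, namely Corollary \ref{cormainE} and Theorem \ref{thmmainE}(ii). First I would fix the ambient invariant subspace by taking $\mathcal B([0,1],\mathbb R)=BV([0,1],\mathbb R)$, which is admissible under (H$_1$) by Example \ref{ex5}; by Example \ref{ex5a} the class $BV([0,1],\mathbb R)_a^b$ is then closed under every $B\in\mathfrak B$, hence under every $B\in\mathfrak M$, for each pair $a,b\in\mathbb R$. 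The only structural difference from Corollary \ref{cor4} is that (H$_1$) does not by itself guarantee a solution of \eqref{E0} of bounded variation, which is precisely why the solvability of \eqref{E0} must appear as a separate hypothesis here rather than being absorbed into an automatic statement.

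For the first equivalence I would read off the boundary values from the candidate solution. If $\varphi\in BV([0,1],\mathbb R)$ solves \eqref{E}, set $a=\varphi(0)$ and $b=\varphi(1)$, so that $\varphi\in\mathfrak{sol}_a^b\eqref{E}$ and, by Lemma \ref{lempropTB}(ii), $g\in\mathcal B_0^0$. Since $BV([0,1],\mathbb R)_a^b$ is closed under every $B\in\mathfrak M$, Corollary \ref{cormainE} then yields both that $g$ is admissible for each such $B$ and that \eqref{E0} has a solution in $\mathcal B_a^b\subset BV([0,1],\mathbb R)$; in particular, in this setting admissibility for some and for every $B\in\mathfrak M$ coincide. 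Conversely, if $g$ is admissible for some $B\in\mathfrak M$ (which forces $g\in\mathcal B_0^0$) and $\Phi\in BV([0,1],\mathbb R)$ solves \eqref{E0}, I would put $a=\Phi(0)$, $b=\Phi(1)$, observe $\Phi\in\mathfrak{sol}_a^b\eqref{E0}$, and apply Corollary \ref{cormainE} in $\mathcal B_a^b$ with that same $B$ to produce a solution of \eqref{E} in $\mathcal B_a^b\subset BV([0,1],\mathbb R)$.

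The \emph{moreover} clause is handled in the same spirit. Given a solution $\varphi\in BV([0,1],\mathbb R)$ of \eqref{E}, fix $a=\varphi(0)$, $b=\varphi(1)$; the first part shows $g$ is admissible for every $B\in\mathfrak M$, so Theorem \ref{thmmainE}(ii) gives $\mathfrak{sol}_a^b\eqref{E}=\{B_h+B_*:h\in\mathcal B_a^b\}$ and hence $\varphi=B_h+B_*$ for some $h\in BV([0,1],\mathbb R)$ and any prescribed $B\in\mathfrak M$. For the reverse implication I would argue directly, bypassing the existence theory: if $\varphi=B_h+B_*\in BV([0,1],\mathbb R)$ with $h\in BV([0,1],\mathbb R)$ and $B\in\mathfrak M$, then the mere well-definedness of $B_*$ forces $\mathcal G$ to be bounded, whence $g(0)=g(1)=0$ by \eqref{cond} and so $g\in\mathcal B_0^0$; Lemmas \ref{lemBh} and \ref{lemB*} now give $TB_h=B_h$ and $TB_*=B_*-g$, so that $T\varphi+g=TB_h+TB_*+g=B_h+B_*=\varphi$, while $B_h,B_*\in\mathcal M([0,1],\mathbb R)$ places $\varphi$ in $\mathcal M([0,1],\mathbb R)$. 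Thus $\varphi$ solves \eqref{E}.

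The computational content is light, since the lemmas of Sections 3 and 4 carry the load; I expect the only delicate point to be the passage between the full space $BV([0,1],\mathbb R)$ and the classes $BV([0,1],\mathbb R)_a^b$. The step to get right is that the boundary values $a,b$ must be read off from the relevant candidate (the solution of \eqref{E} in one direction, of \eqref{E0} in the other) and then held fixed throughout each single application of Corollary \ref{cormainE} and Theorem \ref{thmmainE}, so that the solvability of \eqref{E0} and the admissibility of $g$ are matched on one and the same pair $(a,b)$; this matching is exactly what makes the separately listed hypothesis on \eqref{E0} interact correctly with admissibility.
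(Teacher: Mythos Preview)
Your proposal is correct and follows the same route as the paper, which simply records that, under (H$_1$), Examples \ref{ex5} and \ref{ex5a} make $BV([0,1],\mathbb R)$ an admissible choice of $\mathcal B([0,1],\mathbb R)$ with each $BV([0,1],\mathbb R)_a^b$ closed under every $B\in\mathfrak M$, and then invokes Theorems \ref{thmmainE0} and \ref{thmmainE}. Your more explicit handling of the passage between $BV([0,1],\mathbb R)$ and the boundary-value classes $BV([0,1],\mathbb R)_a^b$---reading off $(a,b)$ from the relevant candidate and holding it fixed through a single application of Corollary \ref{cormainE} or Theorem \ref{thmmainE}(ii)---is exactly the bookkeeping the paper suppresses in its one-line proof.
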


\begin{proof}
It is enough to apply Theorems \ref{thmmainE0} and \ref{thmmainE} with $\mathcal B([0,1],\mathbb R)=BV([0,1],\mathbb R)$ and arbitrary $B\in\mathfrak M$, which is possible in view of Examples \ref{ex5} and \ref{ex5a}.
\end{proof}

Before we formulate the last corollary of this paper let us to extend the main result of \cite{M1985} to equation \eqref{E0}.

\begin{proposition}
Assume $({\rm H}_1)$. If $\Phi\in BV([0,1],\mathbb R)$ satisfies \eqref{E0}, then also $\Phi_+$ and $\Phi_-$ satisfy \eqref{E0}.
\end{proposition}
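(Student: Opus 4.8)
The plan is to reduce the statement to the single identity $TV=V$, where $V(x)=\mathrm{Var}_{[0,x]}\Phi$ denotes the total variation function of $\Phi$, and then to prove that identity by two opposite inequalities. First I recall the Jordan decomposition: the positive and negative variation functions are $\Phi_+=\tfrac12\bigl(V+\Phi-\Phi(0)\bigr)$ and $\Phi_-=\tfrac12\bigl(V-\Phi+\Phi(0)\bigr)$, both increasing, vanishing at $0$, with $\Phi=\Phi(0)+\Phi_+-\Phi_-$. Under (H$_1$) increasing functions belong to $BV([0,1],\mathbb R)=\mathcal B([0,1],\mathbb R)$ by Example~\ref{ex5}, so $TV$, $T\Phi_+$ and $T\Phi_-$ are well defined. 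Since $T$ is linear, fixes every constant, and $T\Phi=\Phi$ by hypothesis, applying $T$ to the two formulas gives $T\Phi_+-\Phi_+=T\Phi_--\Phi_-=\tfrac12(TV-V)$. Consequently, once I establish $TV=V$, both $\Phi_+$ and $\Phi_-$ are fixed by $T$, that is, they satisfy \eqref{E0}, which is exactly the claim.

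The inequality $V\le TV$ is the routine direction. Fix $x\in[0,1]$ and a partition $0=x_0<\dots<x_n=x$. Using $\Phi=T\Phi$ and the triangle inequality under the integral, I bound $\sum_i|\Phi(x_i)-\Phi(x_{i-1})|$ by $\int_\Omega\sum_i|\Phi(f(x_i,\omega))-\Phi(f(x_{i-1},\omega))|\,dP(\omega)$. For each fixed $\omega$ the monotonicity of $f(\cdot,\omega)$ makes $f(x_0,\omega)\le\dots\le f(x_n,\omega)$ an increasing chain in $[0,f(x,\omega)]$ that starts at $f(0,\omega)=0$ by \eqref{cond}; hence the inner sum is at most $\mathrm{Var}_{[0,f(x,\omega)]}\Phi=V(f(x,\omega))$. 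Integrating and passing to the supremum over all partitions of $[0,x]$ yields $V(x)\le TV(x)$.

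The reverse inequality $TV\le V$ is the crux, and I expect it to be the main obstacle. By the first paragraph $TV-V=2(T\Phi_+-\Phi_+)\ge 0$, so the real content is that composing with the increasing maps $f(\cdot,\omega)$ and averaging over $\Omega$ creates no extra variation; equivalently, no variation of $\Phi$ is lost in the jump gaps of the functions $f(\cdot,\omega)$. The difficulty is visible in the naive approach: one would like a single partition of $[0,x]$ whose $f(\cdot,\omega)$-images nearly realize $V(f(x,\omega))$ for $P$-almost every $\omega$ simultaneously, but the variation-maximizing partition of $[0,f(x,\omega)]$ depends on $\omega$, and where $f(\cdot,\omega)$ jumps it skips subintervals on which $\Phi$ may carry variation. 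Note also that the increasing solution $W=\lim_m T^mV\ge V$ obtained from $V\le TV$ need not equal $V$, which confirms that the reverse bound cannot be extracted from the easy direction alone and must genuinely use the fixed-point identity $\Phi=T\Phi$, not merely the inequality it produces. I would resolve this by adapting the variation estimate underlying the main result of \cite{M1985}, which controls the effect of a single increasing inner function, and transporting it to the present averaged setting via a measurable-selection argument in $\omega$ together with the interval form $\mathrm{Var}_{[u,v]}\Phi\le\int_\Omega\mathrm{Var}_{[f(u,\omega),f(v,\omega)]}\Phi\,dP(\omega)$ of the equation. Once $TV\le V$ is secured, combining it with $V\le TV$ gives $TV=V$, and the reduction above finishes the proof.
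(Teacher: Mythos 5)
Your reduction and your easy direction are sound: the identities $\Phi_+=\tfrac12\bigl(V+\Phi-\Phi(0)\bigr)$, $\Phi_-=\tfrac12\bigl(V-\Phi+\Phi(0)\bigr)$, together with linearity of $T$ and the fact that $T$ fixes constants, correctly reduce the proposition to $TV=V$, and your partition argument for $V\le TV$ is correct. The genuine gap is your third paragraph: you never prove $TV\le V$. You only announce that you ``would resolve this by adapting the variation estimate underlying the main result of \cite{M1985}'' via a measurable-selection argument, which is not a proof; and the obstacle you describe (finding one partition whose images nearly realize $V(f(x,\omega))$ for almost every $\omega$ simultaneously) is indeed a dead end for the route you sketch. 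As written, the crux of the proposition is missing.

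What you overlooked is that the boundary condition \eqref{cond}, which you invoke only at the endpoint $0$ and only in the easy direction, closes the gap immediately. Your own partition argument, run on an arbitrary subinterval, gives exactly the ``interval form'' you wrote down: $\mathrm{Var}_{[u,v]}\Phi=V(v)-V(u)\le TV(v)-TV(u)$ for all $0\le u\le v\le 1$. Now take $u=x$ and $v=1$: since $f(1,\omega)=1$ for every $\omega$, we have $TV(1)=\int_\Omega V(f(1,\omega))\,dP(\omega)=V(1)$, whence $V(1)-V(x)\le V(1)-TV(x)$, i.e.\ $TV(x)\le V(x)$. Combined with the other direction this gives $TV=V$, and your first paragraph finishes the proof --- no measurable selection and no input from \cite{M1985} is needed. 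This endpoint squeeze is precisely the mechanism of the paper's proof: there one derives, from the minimality of the Jordan decomposition, the increment inequality $\Phi_+(y)-\Phi_+(x)\le T\Phi_+(y)-T\Phi_+(x)$ for $x\le y$, and then sets $x=0$ and $y=1$ in turn, where \eqref{cond} forces $T\Phi_+(0)=\Phi_+(0)$ and $T\Phi_+(1)=\Phi_+(1)$, to conclude $\Phi_+=T\Phi_+$. Your remark that the reverse bound ``cannot be extracted from the easy direction alone'' is therefore misleading: it cannot be extracted from the pointwise inequality $V\le TV$, but it does follow from the interval form of that same easy estimate once both endpoint conditions in \eqref{cond} are used.
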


\begin{proof}
Fix $\Phi\in BV([0,1],\mathbb R)$ satisfying \eqref{E0}. Define functions $F,G\colon[0,1]\to\mathbb R$ by putting $F(x)=\int_{\Omega}\Phi_+(f(x,\omega))dP(\omega)$ and $G(x)=\int_{\Omega}\Phi_-(f(x,\omega))dP(\omega)$, where $\Phi_+$ and $\Phi_-$ are the upper and the lower variation (obtained by the Jordan decomposition) of $\Phi$, respectively. Then $\Phi_+-\Phi_-=F-G$ and by (H$_1$) both the functions $F$ and $G$ are increasing. Hence the Jordan decomposition yields $\Phi_+(y)-\Phi_+(x)\leq F(y)-F(x)$ for all $0\leq x\leq y\leq 1$. Putting in the last inequality $x=0$ and $y=1$ in turn and making use of \eqref{cond} we obtain $\Phi_+(y)\leq F(y)$ and $\Phi_+(x)\geq F(x)$ for all $x,y\in[0,1]$. In consequence, $\Phi_+=F$ and $\Phi_-=G$.
\end{proof}

The above proposition reduces the problem of determining all solutions of bounded variation of equation \eqref{E0} to that of finding all increasing solutions of this equation. We end this paper determining all increasing solutions of equation \eqref{E0}.

\begin{corollary}\label{cor6}
Assume $({\rm H}_1)$. Then $\varphi\colon [0,1]\to\mathbb R$ is an increasing solution of equation \eqref{E0} if and only if $\varphi=B_h$ with some increasing function $h\colon [0,1]\to\mathbb R$ and $B\in\mathfrak M$.
\end{corollary}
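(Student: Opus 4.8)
The plan is to derive Corollary~\ref{cor6} from Theorem~\ref{thmmainE0} by verifying that its hypotheses are met in the present setting and then stripping away the inessential structure. The key observation is that an increasing solution is a solution of bounded variation with a very specific boundary behaviour, so I would work inside the class $BV([0,1],\mathbb R)$ with $\mathcal B([0,1],\mathbb R)=BV([0,1],\mathbb R)$, which is legitimate under (H$_1$) by Example~\ref{ex5}. The entire content to be extracted is that under (H$_1$) the class $\mathcal B_a^b$ is closed under every $B\in\mathfrak B$ (Example~\ref{ex5a}), so the "Moreover" clause of Theorem~\ref{thmmainE0} applies and gives the exact equality $\mathfrak{sol}_a^b\eqref{E0}=\{B_h:h\in\mathcal B_a^b\}$ for any $B\in\mathfrak M$.

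First I would fix an increasing solution $\varphi$ of \eqref{E0} and set $a=\varphi(0)$, $b=\varphi(1)$; monotonicity forces $a\le b$. Then $\varphi\in BV([0,1],\mathbb R)_a^b=\mathcal B_a^b$ is an element of $\mathfrak{sol}_a^b\eqref{E0}$, and Theorem~\ref{thmmainE0} (its first inclusion, which needs no closure hypothesis) writes $\varphi=B_\varphi$ for every $B\in\mathfrak M$. Since $\varphi$ itself is increasing, this already exhibits $\varphi$ in the desired form $B_h$ with $h=\varphi$ increasing, proving one implication. For the converse I would start from an arbitrary increasing $h$ and an arbitrary $B\in\mathfrak M$, put $a=h(0)$, $b=h(1)$, and check that $B_h$ is a solution of \eqref{E0}: this is exactly Lemma~\ref{lemBh}, which gives $TB_h=B_h$, while Example~\ref{ex5a} guarantees $B_h\in BV([0,1],\mathbb R)_a^b$ and, more importantly, that $B_h$ is \emph{increasing} (the argument there decomposes $h$ and uses that a pointwise Banach limit of increasing functions is increasing; for an already increasing $h$ one simply notes that each $T^mh$ is increasing by (H$_1$) and positivity of $B$ preserves the order). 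Hence $B_h$ is an increasing solution, closing the loop.

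The main point requiring care is the monotonicity of $B_h$ rather than just its membership in $BV$: I must verify that $T$ maps increasing functions to increasing functions under (H$_1$), and that a Banach limit applied pointwise to a sequence of increasing functions yields an increasing function. The former is immediate since $f(\cdot,\omega)$ is increasing and the integral is monotone; the latter follows because for $x\le y$ the sequence $(T^mh(y)-T^mh(x))_{m\in\mathbb N}$ is nonnegative and $B$ is a positive operator, so $B_h(y)-B_h(x)=B\big((T^mh(y)-T^mh(x))_{m\in\mathbb N}\big)\ge 0$. This is precisely the mechanism already recorded in Example~\ref{ex5a}, so no new estimate is needed; the corollary is essentially a repackaging of Theorem~\ref{thmmainE0} together with the monotonicity bookkeeping from Example~\ref{ex5a}, with the boundary values $a,b$ left free instead of fixed in advance.
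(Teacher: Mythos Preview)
Your proposal is correct and follows essentially the same route as the paper: the forward direction is identical ($\varphi=B_\varphi$), and for the converse both arguments observe that $B_h$ is increasing by positivity of $B$ together with the fact that each $T^mh$ is increasing under (H$_1$). The only cosmetic difference is that the paper cites Corollary~\ref{cor5} to conclude that $B_h$ satisfies \eqref{E0}, whereas you invoke Lemma~\ref{lemBh} directly; since Corollary~\ref{cor5} ultimately rests on the same lemma, this is not a genuine divergence.
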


\begin{proof}
If $\varphi\colon [0,1]\to\mathbb R$ is an increasing solution of equation \eqref{E0}, then $\varphi=B_\varphi$ with any $B\in\mathfrak M$.

Conversely, if $h\colon [0,1]\to\mathbb R$ is increasing and $B\in\mathfrak M$, then $B_h$ is increasing as well. Moreover, Corollary \ref{cor5} implies that $B_h$ satisfies \eqref{E0}.
\end{proof}

\section*{Acknowledgments}

This research was supported by the University of Silesia Mathematics Department (Iterative Functional Equations and Real Analysis program)

\bibliographystyle{plain}
\bibliography{bibliography}

\begin{thebibliography}{10}

\bibitem{B1955}
Stefan Banach.
\newblock {\em Th\'{e}orie des op\'{e}rations lin\'{e}aires}.
\newblock Chelsea Publishing Co., New York, 1955.

\bibitem{B2009b}
Karol Baron.
\newblock Linear iterative equations of higher orders and random-valued
  functions.
\newblock {\em Publ. Math. Debrecen}, 75(1-2):1--9, 2009.

\bibitem{B2009a}
Karol Baron.
\newblock On the convergence in law of iterates of random-valued functions.
\newblock {\em Aust. J. Math. Anal. Appl.}, 6(1):Art. 3, 9, 2009.

\bibitem{BJ2001}
Karol Baron and Witold Jarczyk.
\newblock Recent results on functional equations in a single variable,
  perspectives and open problems.
\newblock {\em Aequationes Math.}, 61(1-2):1--48, 2001.

\bibitem{BJ2004}
Karol Baron and Witold Jarczyk.
\newblock Random-valued functions and iterative functional equations.
\newblock {\em Aequationes Math.}, 67(1-2):140--153, 2004.

\bibitem{BK2009}
Karol Baron and Rafa\l Kapica.
\newblock A uniqueness-type problem for linear iterative equations.
\newblock {\em Analysis (Munich)}, 29(1):95--101, 2009.

\bibitem{BKM2016}
Karol Baron, Rafa\l Kapica, and Janusz Morawiec.
\newblock On {L}ipschitzian solutions to an inhomogeneous linear iterative
  equation.
\newblock {\em Aequationes Math.}, 90(1):77--85, 2016.

\bibitem{BK1977}
Karol Baron and Marek Kuczma.
\newblock Iteration of random-valued functions on the unit interval.
\newblock {\em Colloq. Math.}, 37(2):263--269, 1977.

\bibitem{BM2016}
Karol Baron and Janusz Morawiec.
\newblock Lipschitzian solutions to linear iterative equations.
\newblock {\em Publ. Math. Debrecen}, 89(3):277--285, 2016.

\bibitem{BM2017}
Karol Baron and Janusz Morawiec.
\newblock Lipschitzian solutions to linear iterative equations revisited.
\newblock {\em Aequationes Math.}, 91(1):161--167, 2017.

\bibitem{C1969}
Ching Chou.
\newblock On the size of the set of left invariant means on a semi-group.
\newblock {\em Proc. Amer. Math. Soc.}, 23:199--205, 1969.

\bibitem{C1990}
Jeff Connor.
\newblock Almost none of the sequences of {$0$}'s and {$1$}'s are almost
  convergent.
\newblock {\em Internat. J. Math. Math. Sci.}, 13(4):775--777, 1990.

\bibitem{D1977}
Phil Diamond.
\newblock A stochastic functional equation.
\newblock {\em Aequationes Math.}, 15(2-3):225--233, 1977.

\bibitem{D1965}
R.~G. Douglas.
\newblock On the measure-theoretic character of an invariant mean.
\newblock {\em Proc. Amer. Math. Soc.}, 16:30--36, 1965.

\bibitem{F2015}
D.~H. Fremlin.
\newblock {\em Measure theory. {V}ol. 5. {S}et-theoretic measure theory. {P}art
  {I}}.
\newblock Torres Fremlin, Colchester, 2015.
\newblock Corrected reprint of the 2008 original.

\bibitem{KM2009}
Tomasz Kochanek and Janusz Morawiec.
\newblock Probability distribution solutions of a general linear equation of
  infinite order.
\newblock {\em Ann. Polon. Math.}, 95(2):103--114, 2009.

\bibitem{KM2010}
Tomasz Kochanek and Janusz Morawiec.
\newblock Probability distribution solutions of a general linear equation of
  infinite order, {II}.
\newblock {\em Ann. Polon. Math.}, 99(3):215--224, 2010.

\bibitem{K1968}
Marek Kuczma.
\newblock {\em Functional equations in a single variable}.
\newblock Monografie Matematyczne, Tom 46. Pa\'{n}stwowe Wydawnictwo Naukowe,
  Warsaw, 1968.

\bibitem{KCG1990}
Marek Kuczma, Bogdan Choczewski, and Roman Ger.
\newblock {\em Iterative functional equations}, volume~32 of {\em Encyclopedia
  of Mathematics and its Applications}.
\newblock Cambridge University Press, Cambridge, 1990.

\bibitem{L2009}
Paul~B. Larson.
\newblock The filter dichotomy and medial limits.
\newblock {\em J. Math. Log.}, 9(2):159--165, 2009.

\bibitem{L1988}
Stanis{\l}aw {\L}ojasiewicz.
\newblock {\em An introduction to the theory of real functions}.
\newblock A Wiley-Interscience Publication. John Wiley \& Sons, Ltd.,
  Chichester, third edition, 1988.
\newblock With contributions by M. Kosiek, W. Mlak and Z. Opial, Translated
  from the Polish by G. H. Lawden, Translation edited by A. V. Ferreira.

\bibitem{L1948}
G.~G. Lorentz.
\newblock A contribution to the theory of divergent sequences.
\newblock {\em Acta Math.}, 80:167--190, 1948.

\bibitem{M1985}
Janusz Matkowski.
\newblock Remark on {BV}-solutions of a functional equation connected with
  invariant measures.
\newblock {\em Aequationes Math.}, 29(2-3):210--213, 1985.

\bibitem{M1973}
P.~A. Meyer.
\newblock Limites m\'{e}diales, d'apr\`es {M}okobodzki.
\newblock pages 198--204. Lecture Notes in Math., Vol. 321, 1973.

\bibitem{M1969}
Gabriel Mokobodzki.
\newblock Ultrafiltres rapides sur {N}. {C}onstruction d'une densit\'{e}
  relative de deux potentiels comparables.
\newblock In {\em S\'{e}minaire de {T}h\'{e}orie du {P}otentiel, dirig\'{e} par
  {M}. {B}relot, {G}. {C}hoquet et {J}. {D}eny: 1967/68, {E}xp. 12}, page~22.
  Secr\'{e}tariat math\'{e}matique, Paris, 1969.

\bibitem{M1995}
Janusz Morawiec.
\newblock On a linear functional equation.
\newblock {\em Bull. Polish Acad. Sci. Math.}, 43(2):131--142, 1995.

\bibitem{M1998}
Janusz Morawiec.
\newblock Some properties of probability distribution solutions of linear
  functional equations.
\newblock {\em Aequationes Math.}, 56(1-2):81--90, 1998.

\bibitem{MR2008}
Janusz Morawiec and Ludwig Reich.
\newblock The set of probability distribution solutions of a linear functional
  equation.
\newblock {\em Ann. Polon. Math.}, 93(3):253--261, 2008.

\bibitem{P2014}
Albrecht Pietsch.
\newblock Traces of operators and their history.
\newblock {\em Acta Comment. Univ. Tartu. Math.}, 18(1):51--64, 2014.

\bibitem{S}
Mariusz Sudzik.
\newblock Iterative functional equations and global attractive fixed points.
\newblock manuscript.

\bibitem{W2000}
Benjamin Weiss.
\newblock A survey of generic dynamics.
\newblock In {\em Descriptive set theory and dynamical systems
  ({M}arseille-{L}uminy, 1996)}, volume 277 of {\em London Math. Soc. Lecture
  Note Ser.}, pages 273--291. Cambridge Univ. Press, Cambridge, 2000.

\end{thebibliography}

\end{document}